\documentclass[11pt]{amsart}

\usepackage{amsmath,latexsym,amssymb,verbatim}
\usepackage{graphicx}
\usepackage{psfrag}
\usepackage{color}
\usepackage{tikz}
\usepackage{pgfplots}
\pgfplotsset{compat=1.17}
\usepackage{pstricks-add}
\usepackage{pst-func}

\newtheorem{lema}{Lemma}[section]
\newtheorem{theo}[lema]{Theorem}
\newtheorem{prop}[lema]{Proposition}

\theoremstyle{definition}

\theoremstyle{remark}
\newtheorem{rema}[lema]{Remark}

\definecolor{rojo}{rgb}{0.75,0,0}
\definecolor{oro}{rgb}{0.85, 0.65, 0.13}
\definecolor{azul}{rgb}{0.05, 0.05, 0.93}

\title[Hilbert number for piecewise nonautonomous equations]{Hilbert number for a family of piecewise nonautonomous equations}
\author{J.L. Bravo, M. Fern\'{a}ndez, and I. Ojeda}

\address{J.L. Bravo, Departamento de Matem\'{a}ticas,
Universidad de Extremadura, 06006 Badajoz, Spain}
\email{trinidad@unex.es}

\address{M. Fern\'{a}ndez, Departamento de Matem\'{a}ticas, Universidad de Extremadura, 06006 Badajoz, Spain}
\email{ghierro@unex.es}

\address{I. Ojeda, Departamento de Matem\'{a}ticas, Universidad de Extremadura, 06006 Badajoz, Spain }
\email{ojedamc@unex.es}

\subjclass[2010]{Primary 34C25. Secondary: 34A34, 37C27, 37G15.}
\keywords{Linear scalar piecewise odes; Periodic solution; Limit cycle; Abel equation}

\begin{document}
 
\begin{abstract}
For family $x'=(a_0+a_1\cos t+a_2 \sin t)|x|+b_0+b_1 \cos t+b_2 \sin t$, we solve three basic problems related with its dynamics.  First, we characterize when it has a center (Poincaré center focus problem). Second, we show that each equation has a finite number of limit cycles (finiteness problem), and finally we give a uniform upper bound for the number of limit cycles (Hilbert's 16th problem).
\end{abstract}

\maketitle

\section{Introduction and Main Results}

Consider the scalar piecewise equation
\begin{equation}\label{eq:main}
x'= h(t,x)=
\begin{cases}
f(t,x),\quad \text{if }x\geq 0,\\
g(t,x),\quad \text{if }x < 0,\\
\end{cases}
\end{equation}
where $f(t,x)$ and $g(t,x)$ are  continuous and locally Lipschitz continuous with respect to $x$ in $\mathbb{R}^2$ and $f(t,0)=g(t,0)$. Under these hypotheses, there is a unique maximal solution that satisfies each of the initial conditions.

Let $u(t,\tau,x)$ be the maximal solution of \eqref{eq:main} determined by $u(\tau,\tau,x)=x$. Let $f,g \in\mathcal{C}^1(\mathbb{R}^2)$ and $(\tau,x)\in\mathbb{R}^2$. Suppose the set of zeroes of $f(t,0)=g(t,0)$ does not contain any interval, then there exists
$$
u_x(t,\tau,x)= \exp\left(\int_{\tau}^t h_x(s,u(s,\tau,x))\,ds \right)
$$
and it is  continuous with respect to $t$ and $x$ (see \cite[Appendix A]{BFT}). In this paper, subscripts in a function indicate its partial derivatives.

Now assume that $f$ and $g$ are $T-$periodic functions with respect to $t$. We say that $u(t,\tau,x)$ is closed or periodic if $u(T,\tau, x)=x$. Let $u(t,\tau, x)$ be closed. It is singular or multiple if $u_x(T,\tau,x)=1$, otherwise it is simple or hyperbolic.  Isolated periodic solutions are also called limit cycles. A continuum of periodic solutions is called a center of \eqref{eq:main}.

Several authors have studied the case in which $f$ and $g$ are linear, i.e.  $f(t,x)=a^+(t) x +b(t)$ and $g(t,x)=a^-(t) x +b(t)$, where $a^+(t),a^-(t),b(t)$ are real continuous functions defined on $\mathbb{R}$. Then, since $f$ and $g$ are globally Lipschitz continuous with respect to $x$, the solutions are defined in $\mathbb{R}$.

In \cite{CFPT}, from a three-dimensional piecewise linear system with two zones, the following reduced one-dimensional $2\pi-$periodic differential equation is obtained
\begin{equation*} 
x'=\begin{cases}
a^+x+b(t),\text{ if } x\geq0,\\
a^+x+b(t),\text{ if } x<0,
\end{cases}
\end{equation*}
where $a^+,a^-\in\mathbb{R}$, $b(t)=b_0+b_1 \cos(t)$, $b_0\in\{0,1\}$, and $b_1\geq0$. The analysis of the limit cycles of this equation determines the dynamics of the three-dimensional piecewise linear system.

In \cite{JH} the authors obtain upper bounds for the number of limit cycles of 
\begin{equation*}
x'=\begin{cases}
a_1(t)x^m +b(t), \text{ if } x \geq0,\\
a_2(t)x^m +b(t), \text{ if } x <0,
\end{cases}
\end{equation*}
where $m$ is a positive integer and $a_1(t),a_2(t)$ and $b(t)$ are $2\pi$ periodic functions such that $a_1(t)$ and $a_2(t)$ do not change sign.

In \cite{GZ}, an upper bound for the number of limit cycles of equation 
\begin{equation}\label{eq:linearmain}
x'=a(t)|x|+b(t),
\end{equation}
is obtained by analyzing the displacement application, where $a(t)$ and $b(t)$ are real, 1-periodic and $\mathcal{C}^1$ functions such that one of them does not change sign.

In contrast, when $a(t)$ or $b(t)$ changes sign, there is no uniform upper bound for the number of limit cycles of \eqref{eq:main}; indeed, in \cite{CGP} (see also \cite{BFT}) it is shown that for any natural number $k\geq2$ and $\epsilon$ small enough, the differential equation $x'=\epsilon \cos(kt)|x|+\ sin ( t )$ has at least $k-2$ limit cycles. However, if $f$ and $g$ are periodic analytic functions $T$, the following finiteness theorem holds.

\begin{theo}[\cite{BFT}] \label{theo:finite}
Let $f$ and $g$ be real $T-$periodic analytic functions such that  $f(t,0)=g(t,0)$ has $n$ simple zeroes in $[0,T]$. Then there  exist $x_1<\ldots<x_r$, where $1\leq r\leq n+2$, such that, if $u(t)$ is a periodic solution that changes sign, then $u(0)\in [x_1,x_r]$. Moreover, for each $1\leq k< r$ one of the following statements holds:
\begin{enumerate}
\item There is a finite number of values $x\in[x_k,x_{k+1}]$ such that  $u(t,0,x)$ is periodic.
\item For every $x\in[x_k,x_{k+1}]$,  $u(t,0,x)$ is periodic.
\end{enumerate}
\end{theo}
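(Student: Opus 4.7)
\emph{Proof plan.} The natural strategy is to study the displacement function $d(x):=u(T,0,x)-x$, whose zeros correspond to $T$-periodic solutions of~\eqref{eq:main}. Because the vector field $h$ is only piecewise analytic across the line $\{x=0\}$, the return map $x\mapsto u(T,0,x)$ is itself only piecewise analytic; the main task is to identify the pieces and show that there are only finitely many of them.

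First I would assign to each $x\in\mathbb{R}$ its \emph{itinerary}, namely the finite ordered set of times $t\in[0,T)$ at which $u(t,0,x)=0$. For $x$ sufficiently positive the orbit stays above zero and the itinerary is empty; similarly for $x$ sufficiently negative. Let $x_1$ and $x_r$ be the two extreme initial conditions whose orbit just meets $\{x=0\}$; any sign-changing $T$-periodic orbit must have $u(0)\in[x_1,x_r]$. If every zero-crossing of $u(\cdot,0,x)$ is transversal, the implicit function theorem shows that the itinerary is locally constant in $x$ and each crossing time depends analytically on $x$. Hence the itinerary can only change across those critical initial conditions at which $u(\cdot,0,x)$ is tangent to $\{x=0\}$ at some time $t^*$; the tangency condition forces $u'(t^*)=h(t^*,0)=f(t^*,0)=0$, so $t^*$ must be one of the $n$ simple zeros $t_1^*,\ldots,t_n^*$ of $f(t,0)$. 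Using the simplicity of these zeros, each $t_i^*$ contributes only boundedly many critical values of $x$; a careful count, together with the two extremes $x_1$ and $x_r$, would yield the bound $r\leq n+2$.

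On each open subinterval $(x_k,x_{k+1})$ the itinerary is fixed, so $P(x):=u(T,0,x)$ is a finite composition of solution flows of the analytic equations $x'=f(t,x)$ and $x'=g(t,x)$, taken over time intervals whose endpoints are analytic functions of $x$ (again by the implicit function theorem applied to the transversal crossings). It follows that $d(x)$ extends real-analytically to a neighbourhood of the closed interval $[x_k,x_{k+1}]$. The identity principle for real-analytic functions then gives the dichotomy stated in the theorem: either $d\equiv 0$ on $[x_k,x_{k+1}]$, and alternative (2) holds, or $d$ has only finitely many zeros on this compact interval, and alternative (1) holds.

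The main obstacle is the combinatorial step in the itinerary analysis: controlling, globally in $x$, how many critical initial conditions can arise from each tangency time $t_i^*$, so as to obtain the sharp bound $r\leq n+2$ and not merely some finite number. The simplicity hypothesis on the zeros of $f(t,0)$ is what controls the local bifurcation picture near a tangency (locally a tangency unfolds into at most two transversal crossings as $x$ varies); propagating this local picture into a global count, and ruling out accumulation of critical values, is where the analyticity of $f$ and $g$ (rather than mere smoothness) is genuinely needed.
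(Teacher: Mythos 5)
First, a caveat: the paper does not prove this statement at all --- it is quoted verbatim from the reference [BFT], so there is no in-paper proof to compare with. Your plan does follow the natural (and, as far as the strategy goes, the standard) route: split the $x$-axis according to the crossing structure of $u(\cdot,0,x)$ with $\{x=0\}$, observe that the structure can only change at initial conditions whose solution is tangent to $\{x=0\}$, i.e.\ passes through a point $(t_i^*,0)$ with $f(t_i^*,0)=0$, and on each resulting subinterval exploit analyticity of the return map together with the identity principle. The problem is that you leave open precisely the step that carries the content of the bound $r\le n+2$, and you flag it as the ``main obstacle''; in fact it has a one-line resolution that is missing from your proposal. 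Since $f(t,0)=g(t,0)$, the piecewise field $h$ is locally Lipschitz in $x$ across $\{x=0\}$ (for $x_1\ge 0\ge x_2$, $|h(t,x_1)-h(t,x_2)|\le |f(t,x_1)-f(t,0)|+|g(t,0)-g(t,x_2)|\le L|x_1-x_2|$), so solutions are unique forwards \emph{and} backwards. Hence each zero $t_i^*$ of $f(t,0)$ determines exactly one solution through $(t_i^*,0)$ and therefore at most one critical initial value $\xi_i=u(0,t_i^*,0)$ (when that backward solution reaches $t=0$). There is no combinatorial count to perform and no possible accumulation of critical values: there are at most $n$ interior division points, plus the two extremes, giving $r\le n+2$ directly. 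Without this observation your argument does not produce the stated bound, only ``finitely many pieces'' at best.

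The second gap is your claim that $d$ extends real-analytically to a neighbourhood of the \emph{closed} interval $[x_k,x_{k+1}]$. What the transversality argument gives is analyticity of the crossing times, hence of $d$, on the \emph{open} interval $(x_k,x_{k+1})$. At an endpoint the solution grazes $\{x=0\}$ at a simple zero of $f(t,0)$ (a quadratic tangency, since $u''(t_i^*)=f_t(t_i^*,0)\neq 0$), the number of crossings jumps, and the entry/exit times behave like $t_i^*\pm c\sqrt{|x-x_k|}$; because the solution switches to the other field $g$ between them, the return map is in general only one-sidedly of Puiseux type there, not analytic in a two-sided neighbourhood. Since the theorem asserts the finite/continuum dichotomy on the closed interval, you must still rule out zeros of $d$ accumulating at $x_k$ or $x_{k+1}$, e.g.\ by the one-sided fractional-power expansions at the grazing value (whose nontrivial case still has isolated zeros) or by a separate monotonicity/continuity argument at the endpoint; analyticity on the open interval plus continuity is not enough as stated. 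With these two points repaired, your outline becomes a correct proof along the lines of the argument in [BFT].
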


In this paper we solve three fundamental problems related to the dynamics of \eqref{eq:linearmain} when $a(t)$ and $b(t)$ are linear trigonometric polynomials. First, we characterize when \eqref{eq:linearmain} has a center (Poincaré center-focus problem). Second, we prove that each equation has a finite number of limit cycles (finiteness problem), and finally we set a uniform upper bound for the number of limit cycles (Hilbert's 16th problem).

\section{Linear trigonometric coefficients}

Consider the Abel piecewise linear equation
\begin{equation}\label{eq:abs}
x'=a(t)|x| + b(t),
\end{equation}
where $a(t)=a_0 + a_1\cos(t)+a_2 \sin(t)$ and $b(t)= b_0 + b_1\cos(t)+b_2 \sin(t)$. 

Since periodic solutions with constant sign are determined by a linear differential equation, the key point is to study periodic solutions that change sign, that is, those solutions $u(t)$ such that there exist $t_1,t_2$ such that $u(t_1)u(t_2)<0$. In particular, $b(t)$ must change sign, otherwise $x=0$ is a lower or an upper solution, thus avoiding the existence of periodic solutions that change sign. By a translation of time, it is not restrictive to assume $b(0)=0$ and $b(t)>0$ for $t>0$ close to $0$. Moreover, since $b(t)$ is linear, $b(t)=0$ has a unique solution in $(0,2\pi)$, which we denote $\bar t$.

In consequence, if $u(t)$ is a periodic solution of~\eqref{eq:abs} that changes sign, then there exist $0<t_1<\bar t<t_2<2\pi$ such that $u(t_1)=u(t_2)=0$. Moreover, $u(t)<0$ if $t\in (0,t_1) $, $u(t)>0$ if $t\in (t_1,t_2)$ and $u(t)<0$ if $t\in (t_2,t_1+2 \pi)$. Since $u(t)$ is a solution of the IVP $x'=a(t)x+b(t)$, $x(t_1)=0$ for $t\in (t_1,t_2)$, and of $x'=-a(t)x+b(t)$, $x(t_2)=0$ for $t\in (t_2,t_1+2\pi)$, and $u(t_1+2\pi)=0$, we have 
\begin{equation}\label{eq:zero}
\begin{split}
\int_{t_1}^{t_2} b(t)\exp\left(\int_t^{t_2} a(s)\,ds\right)\, dt =0,\\
\int_{t_2}^{t_1+2\pi} b(t)\exp\left(\int_t^{t_1+2\pi} -a(s)\,ds\right)\,dt =0,
\end{split}
\end{equation}
and conversely, if there exist $0<t_1<\bar t<t_2<2\pi$ such that \eqref{eq:zero} holds,
then there exists a periodic solution $u(t)$ of~\eqref{eq:main} determined by $u(t_1)=0$. This allows to reduce the problem of finding periodic solutions to the problem of finding the points where two Poincaré half-maps intersect. Specifically, let 
\[T^+\colon (0,\bar t)\to (\bar t,2\pi)\quad \text{and}\ 
\quad T^-\colon (2\pi,2\pi+\bar t)\to (\bar t,2\pi),\]
where $T^+(t_1)$ is the first time where the solution $u(t)$ of~\eqref{eq:abs} determined by $u(t_1)=0$ is zero, that is, 
\begin{equation} \label{eq:T+}
T^+(t_1)=\inf\{t_2\colon t_2>t_1,\  u(t_2)=0\},
\end{equation}
and $T^-(t_1)$ is  determined by
\begin{equation}\label{eq:T-}
T^-(t_1)=\sup\{t_2<t_1+2\pi \colon t_2>t_1,\  u(t_2)=0\},
\end{equation}
where $u(t)$ is, now, determined by $u(2\pi+t_1)=0$.

\begin{prop}\label{prop:1}
The periodic solutions of \eqref{eq:abs} that change sign are in a one-to-one correspondence with the solutions of $T^+(t)=T^-(t)$, $t\in(0,\bar t)$.
\end{prop}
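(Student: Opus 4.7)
The plan is to establish the claimed bijection by going in both directions and then checking injectivity. The forward direction largely repackages what has been derived in the preamble: given a $2\pi$-periodic solution $u$ of \eqref{eq:abs} that changes sign, one already has zeros $0<t_1<\bar t<t_2<2\pi$ with $u<0$ on $(0,t_1)$, $u>0$ on $(t_1,t_2)$, and $u<0$ on $(t_2,t_1+2\pi)$. On the middle interval the piecewise equation collapses to the linear equation $x'=a(t)x+b(t)$, so $u$ is also the solution of \eqref{eq:abs} with $u(t_1)=0$, and as $u>0$ on $(t_1,t_2)$ the infimum in \eqref{eq:T+} yields $T^+(t_1)=t_2$. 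Analogously, on $(t_2,t_1+2\pi)$ the equation becomes $x'=-a(t)x+b(t)$, and since $u(t_1+2\pi)=0$ and $u<0$ on the open interval, the supremum in \eqref{eq:T-} gives $T^-(t_1)=t_2$.

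For the converse, assume $t_1\in(0,\bar t)$ satisfies $T^+(t_1)=T^-(t_1)=:t_2$. I would build a periodic solution by gluing. Let $u_+$ be the solution of $x'=a(t)x+b(t)$ on $[t_1,t_2]$ with $u_+(t_1)=0$, and $u_-$ the solution of $x'=-a(t)x+b(t)$ on $[t_2,t_1+2\pi]$ with $u_-(t_1+2\pi)=0$. The definition of $T^+$ forces $u_+(t_2)=0$ and the definition of $T^-$ forces $u_-(t_2)=0$, so the glued function is continuous at $t_2$. The required sign conditions $u_+>0$ on $(t_1,t_2)$ and $u_-<0$ on $(t_2,t_1+2\pi)$ follow from the infimum/supremum characterizations together with the boundary slopes $u_+'(t_1)=b(t_1)>0$ and $u_-'(t_1+2\pi)=b(t_1)>0$, which fix the correct signs immediately after $t_1$ and immediately before $t_1+2\pi$, respectively. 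Thus the glued function is a genuine solution of \eqref{eq:abs} on $[t_1,t_1+2\pi]$ with matching zero values at the endpoints, so it extends to a $2\pi$-periodic solution of \eqref{eq:abs} that changes sign.

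For injectivity, the map sending a sign-changing periodic solution $u$ to its unique zero $t_1\in(0,\bar t)$ is well-defined, and two such solutions sharing the same $t_1$ must coincide by uniqueness of the Cauchy problem for \eqref{eq:abs}. Conversely, the gluing construction is determined by $t_1$, so distinct parameters yield distinct solutions. This produces the one-to-one correspondence.

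I do not anticipate a substantial obstacle: the statement is essentially a reformulation of the periodicity condition \eqref{eq:zero} in terms of Poincaré half-maps, and every step reduces to uniqueness of linear ODE solutions combined with the sign bookkeeping that is already absorbed into the $\inf$ and $\sup$ in \eqref{eq:T+}–\eqref{eq:T-}. The only point that deserves explicit mention is the boundary-slope argument invoking $b(t_1)>0$, which guarantees that the two glued pieces have the correct signs throughout their intervals rather than merely near the gluing point.
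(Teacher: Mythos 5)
Your proposal is correct and follows essentially the same route as the paper: the forward direction reads $T^+(t_1)=T^-(t_1)=t_2$ off the sign pattern of the periodic solution, and the converse uses the $\inf/\sup$ definitions, the sign of $b$, and uniqueness of the Cauchy problem to produce the periodic solution. Your phrasing of the converse as gluing two linear pieces at $t_2$ is only a cosmetic repackaging of the paper's argument, which follows the single solution of \eqref{eq:abs} through $(t,0)$.
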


\begin{proof}
Assume $u(t)$ is a periodic solution of \eqref{eq:abs} that changes sign. 

If $u(0)=u(2\pi)\geq 0$, as $b(t)>0$ for $t\in(0,\bar t)$, then $u(t)\geq 0$ for $t\in[0,\bar t]$, and, as $b(t)<0$ for $t\in (\bar t,2\pi)$, then $u(t)\geq 0$ for $t\in [\bar t, 2\pi]$, so it has definite sign, in contradiction with the assumption. 

Therefore, $u(0)=u(2\pi)<0$. Since $b(t)$ has a single zero at $(0,2\pi)$, $u(t)$ has at most one sign change at both $(0,\bar t)$ and $(\bar t,2\pi )$. So, as $u(t)$ has indefinite sign, there exist $0<t_1<\bar t<t_2<2\pi$ such that $u(t_1)=u(t_2)=u(t_1+2\pi)=0$. Therefore, \eqref{eq:zero} holds and, consequently, $T^+(t_1)=T^-(t_1)$.

Conversely, suppose $t\in(0,\bar t)$ is such that $T^+(t)=T^-(t)$. Note that if $T^+(t)$ is well defined, by the sign of $b(t)$, we have that \[T^+(t)=T^-(t)\in (\bar t,2\pi).\]

Let $u(t)$ be the solution of~\eqref{eq:abs} determined by $u(t)=0$. As $u(t)$ can change sign at most once in each interval where $b(t)$ has definite sign, $u(s)>0$ for $t<s<\bar t$, and, by \eqref{eq:T+}, $u(T^+(t))=0$. And, since $T^+(t)=T^-(t)$, then $u(T^-(t))=0$. Therefore, $u(t)<0$ for $T^-(t)<s<2\pi$, and, by \eqref{eq:T-}, $u(t+2\pi)=0$, so $u(t)$ is a periodic solution.
\end{proof}

Note that the Poincaré half-maps are defined implicity by 
\begin{equation}\label{eq:imp2}
\begin{split}
\int_{t}^{T^+(t)} b(s) \exp\left(\int_s^{2\pi}a(r)\,dr \right) \, ds &= 0,\\  \int_{T^-(t)}^{t+2\pi} b(s) \exp\left(-\int_s^{2\pi}a(r)\,dr \right) \, ds &= 0.
\end{split}
\end{equation}

\begin{lema}\label{lema:Ts}
The functions $T^+(t)$ and $T^-(t)$ are solutions of the differential equations
\begin{align}\label{eq:f}
b(x)x'- b(t) \exp\left(\int_t^x a(s)\,ds\right)&=0,\\
\label{eq:g} b(x)x'-b(t) \exp\left( \int_{t+2\pi}^x -a(s)\,ds\right) &=0,
\end{align}
respectively.
\end{lema}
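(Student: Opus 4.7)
The plan is to prove both ODEs by implicit differentiation of the integral equations \eqref{eq:imp2} that define $T^+$ and $T^-$. Nothing deeper is really needed; the whole content is that the defining relations are of the form $F(t,T^\pm(t))=0$ with an integrand that is well-behaved, so $T^\pm$ are differentiable and their derivatives can be read off by the chain rule.

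For $T^+$, I would set
\[
F(t) := \int_{t}^{T^+(t)} b(s)\exp\!\left(\int_s^{2\pi} a(r)\,dr\right)ds \equiv 0,
\]
and differentiate in $t$. The upper limit contributes $b(T^+(t))\exp(\int_{T^+(t)}^{2\pi} a)\cdot (T^+)'(t)$ and the lower limit contributes $-\,b(t)\exp(\int_{t}^{2\pi} a)$. Setting the sum to zero and dividing by $\exp(\int_{T^+(t)}^{2\pi} a)$ (which is nonzero), the difference of the two exponentials collapses to $\exp(\int_{t}^{T^+(t)} a(s)\,ds)$, which is exactly \eqref{eq:f} written with $x=T^+(t)$.

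For $T^-$, I would do the same with
\[
G(t) := \int_{T^-(t)}^{t+2\pi} b(s)\exp\!\left(-\int_s^{2\pi} a(r)\,dr\right)ds \equiv 0.
\]
Differentiating, the upper limit yields $b(t+2\pi)\exp(-\int_{t+2\pi}^{2\pi}a)=b(t)\exp(-\int_{t+2\pi}^{2\pi}a)$ by $2\pi$-periodicity of $b$, and the lower limit yields $-\,b(T^-(t))\exp(-\int_{T^-(t)}^{2\pi}a)\cdot (T^-)'(t)$. Combining them and simplifying the exponentials via $-\!\int_{t+2\pi}^{2\pi}a+\int_{T^-(t)}^{2\pi}a=\int_{t+2\pi}^{T^-(t)}-a(s)\,ds$ yields \eqref{eq:g} with $x=T^-(t)$.

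The only thing that requires any care is the differentiability of $T^\pm$ and the correct bookkeeping of the signs in the exponentials; differentiability is immediate from the implicit function theorem, since the $\partial_x$-derivative of the defining relation equals $b(T^\pm(t))\exp(\pm\!\int_{T^\pm(t)}^{2\pi}a)$, which is nonzero on the open set where $T^\pm$ are defined (there $T^\pm(t)\in(\bar t,2\pi)$, so $b(T^\pm(t))\neq 0$). So the main obstacle is purely notational: keeping track of the three exponentials in each calculation and checking that they telescope to the stated form. No analytic input beyond the fundamental theorem of calculus is required.
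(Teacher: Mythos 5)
Your proposal is correct and follows essentially the same route as the paper: the paper's proof is precisely the differentiation of the implicit relations \eqref{eq:imp2} with respect to $t$, and your bookkeeping of the exponentials (collapsing $\int_t^{2\pi}a-\int_{T^+(t)}^{2\pi}a$ to $\int_t^{T^+(t)}a$, and similarly for $T^-$ using the $2\pi$-periodicity of $b$) reproduces exactly the displayed identities \eqref{deq:T}. Your added remark justifying differentiability of $T^\pm$ via the implicit function theorem, using $b(T^\pm(t))\neq 0$ since $T^\pm(t)\in(\bar t,2\pi)$, is a small extra that the paper leaves implicit.
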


\begin{proof}
It is enough to observe that deriving \eqref{eq:imp2} with respect to $t$, we obtain
\begin{gather}\label{deq:T}
b(T^+(t)) \frac{d T^+}{dt}(t)-b(t)\exp\left(\int_t^{T^+(t)}a(s)\,ds   \right)=0,\\
b(T^-(t)) \frac{d T^-}{dt}(t) -b(t)\exp\left(-\int_{t+2\pi}^{T^-(t)}a(s)\,ds   \right)=0.
\end{gather}
\end{proof}

\subsection{The center problem}
Let $u(t,x)$ denote the solution of~\eqref{eq:abs} determined by $u(0,x)=x$. Given $x\in\mathbb{R}$  define the displacement application 
\[d(x) := u(2\pi,x)-x.\] Note that the solution $u(\cdot,x)$ is $2\pi-$periodic if and only if $d(x)=0$. Equation~\eqref{eq:abs}  has a center if $d(x)=0$ for every $x$ in an interval. We say that \eqref{eq:abs} has a global center if $d$ is identically zero.

A linear center is defined as a center where the periodic solutions of the continuum have definite sign.

For $x>0$ large enough, the solution $u(\cdot,x)$ is positive, so the displacement application is
\[
d(x)=(A-1)x+B,
\]
where
\begin{equation}\label{eq:AB}
A=\exp\left(\int_0^{2\pi}a(r)\,dr\right),\quad B=\int_0^{2\pi}b(s)\exp\left(\int_s^{2\pi}a(r)\,dr \right)\,ds.
\end{equation}
Analogously, for $x<0$ small enough  $d(x)=(\bar A-1)x+\bar B$, where $\bar A,\bar B,$ are defined analogously.
Therefore, for $|x|$ large enough, the equation is linear, so it has a center
if $A=1$, $B=0$, for $x\gg 0$, and if $\bar A=1$, $\bar B=0$ for $x\ll 0$.

If $b(t)\equiv 0$, the non-zero solutions are either always positive or negative, so the possible centers are linear (or global) and determined by conditions above. The general case is covered in the next result.

\begin{theo}
Equation \eqref{eq:abs} has linear centers or a global center. Moreover, it has a global center if and only if $a_0=0$ and $a(t)$ and $b(t)$ are proportional.
\end{theo}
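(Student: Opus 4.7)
The plan is to partition centers into linear and sign-changing types, and reduce the second type to a structural relation between $a$ and $b$ via the half-map coincidence $T^+\equiv T^-$. First I would note that for $|x|$ large, $u(\cdot,x)$ has constant sign, so the displacement reduces to the affine expressions $d(x)=(A-1)x+B$ for $x\gg 0$ and $d(x)=(\bar A-1)x+\bar B$ for $x\ll 0$ from \eqref{eq:AB}; a center whose continuum lies entirely in one such definite-sign regime therefore forces the corresponding affine map to vanish identically, extending periodicity to a full half-line, which is precisely a linear center. Otherwise the continuum contains sign-changing orbits, and by Proposition~\ref{prop:1} these yield a continuum of $t\in(0,\bar t)$ with $T^+(t)=T^-(t)$; real analyticity of $T^{\pm}$ (from the analytic implicit function theorem applied to \eqref{eq:imp2}, using that the Jacobian $b(T^{\pm})\exp(\cdots)$ does not vanish on $(\bar t,2\pi)$) upgrades this to $T^+\equiv T^-=:T$ on the entire interval.

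Next I would extract consequences of $T^+\equiv T^-$. Equating the two right-hand sides of \eqref{eq:f}--\eqref{eq:g} gives
\[
\int_t^{T(t)}a(s)\,ds=\int_{T(t)}^{t+2\pi}a(s)\,ds=\pi a_0,\qquad t\in(0,\bar t).
\]
Letting $t\to 0^+$ (so $T(t)\to 2\pi^-$) forces $2\pi a_0=\pi a_0$, whence $a_0=0$ and $A=\bar A=1$. The identity then becomes $\int_t^T a\equiv 0$; differentiating yields $a(T)T'=a(t)$, while Lemma~\ref{lema:Ts} reduces to $T'=b(t)/b(T)$. To pin down the proportionality I would analyze the ODE at the boundary zeros of $b$: at $t=0^+$ it is of the form $0/0$, and L'H\^opital combined with the periodicity $b'(0)=b'(2\pi)$ gives $(T'(0^+))^2=1$, with sign $-1$ since $T$ decreases from $2\pi^-$ to $\bar t^+$. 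Inserting $T'(0^+)=-1$ into $a(T)T'=a(t)$ at $t=0$ (using $a(2\pi)=a(0)$) forces $a(0)=0$; the analogous argument at $\bar t$ gives $a(\bar t)=0$. Combined with the normalization $b(0)=0$ and $a_0=0$, the condition $a(0)=0$ yields $a_1=0$, so $a(t)=a_2\sin t$; then $a_2\sin\bar t=0$ forces either $\bar t=\pi$ (hence $b_0=b_1=0$, $b(t)=b_2\sin t$, and $a=(a_2/b_2)b$) or $a\equiv 0$ (in which case the identities $\int_t^T b=0=\int_T^{t+2\pi}b$ sum to $2\pi b_0=0$, again giving $b(t)=b_2\sin t$). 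In every subcase, $a_0=0$ and $a$ is proportional to $b$.

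The converse is a direct verification: if $a_0=0$ and $a=\lambda b$, then $b_0=0$ follows from $a_0=\lambda b_0$ when $\lambda\neq 0$ (the degenerate case $a\equiv 0$ needs the condition $b_0=0$ to be checked separately), and the substitution $u=\int_0^s b$ in \eqref{eq:AB} gives $B=\bar B=0$, so all definite-sign orbits are periodic; for sign-changing orbits, explicit integration of $x'=b(t)(\lambda|x|+1)$ on each of $\{x\ge 0\}$ and $\{x<0\}$ yields periodic solutions since $\int_0^{2\pi}b=0$. Hence $d\equiv 0$ and the center is global. The main obstacle in the proof is converting the implicit coincidence $T^+\equiv T^-$ into the structural statement that $a$ and $b$ are proportional; the L'H\^opital step at the boundary zeros of $b$, yielding $a(0)=a(\bar t)=0$, is the non-obvious crux that makes the proportionality emerge from what would otherwise be only a pointwise identity along the curve $t\mapsto(t,T(t))$.
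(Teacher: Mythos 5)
Your route is genuinely different from the paper's: the paper first proves that a non-linear center must be global by using differentiability of the displacement map ($d\equiv 0$ on the interval of initial values of the sign-changing orbits and affine outside forces $d\equiv 0$ everywhere), reads $a_0=0$ off $A=1$, and only then works with $T^+\equiv T^-$, pinning down $T(t)=2\pi-t$ explicitly; you instead try to extract everything from the identity $\int_t^{T(t)}a=\int_{T(t)}^{t+2\pi}a=\pi a_0$ and an endpoint analysis, which is an attractive idea (your L'H\^opital argument giving $a(0)=a(\bar t)=0$ is a nice alternative to the paper's computation). But there is a genuine gap exactly at the step that produces $a_0=0$: the parenthetical claim that $T(t)\to 2\pi^-$ as $t\to 0^+$ is asserted, not proved, and it is not automatic. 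Continuous dependence only gives $T(t)\to\tau_0$, where $\tau_0\in(\bar t,2\pi]$ is the first positive zero of the solution $u_0$ through $(0,0)$; if $\tau_0<2\pi$, the limit of your identity returns the tautology $2\pi a_0=2\pi a_0$ and the whole chain ($a_0=0$, then $T'(0^+)=-1$, then $a(0)=0$, then proportionality) collapses. Ruling out $\tau_0<2\pi$ requires a real argument, for instance: $u_0$ is periodic (locally uniform limit of the periodic solutions $u_t$), so $u_0(2\pi)=0$, $u_0'(2\pi)=b(2\pi)=0$ and $u_0''(2\pi)=b'(0)>0$, making $2\pi$ a strict local minimum of $u_0$, which is incompatible with $u_0<0$ on $(\tau_0,2\pi)$. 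Alternatively one can follow the paper and first prove the center is global, getting $a_0=0$ from $A=1$. As written, the crux of the forward implication rests on an unproved claim.

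Related smaller points you should also address: the companion limit $T(t)\to\bar t^+$ as $t\to\bar t^-$, used at the other endpoint, needs a (shorter) justification via the solution through $(\bar t,0)$ having a strict maximum at $\bar t$; the analytic-continuation step should say why the common domain of $T^+$ and $T^-$ reaches down to $t=0^+$ (otherwise the endpoint analysis is performed at the wrong point); the L'H\^opital step presupposes that the one-sided limit $T'(0^+)$ exists, which needs a word (monotonicity of $T$ together with $T'=b(t)/b(T(t))$, or the integral identity); and the ``only if'' direction must also cover global centers without sign-changing orbits ($b\equiv 0$, and the easy exclusion of $b$ of fixed sign via $d(0)=B\neq 0$), which your argument, starting from sign-changing orbits, does not reach even though these cases are immediate. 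The converse direction you give (separating variables in $x'=b(t)(\lambda|x|+1)$ with $\int_0^{2\pi}b=0$) is correct and is a legitimate alternative to the paper's symmetry argument.
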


\begin{proof}
If $b(t)$ is identically zero, in particular it is proportional to $a(t)$. Since $u(t)\equiv0$ is a solution, there are no periodic solutions that change sign. Therefore, \eqref{eq:abs} has only linear centers. Moreover, by comment above, \eqref{eq:abs} has a global center if and only if $A=\bar A =1$, which is equivalent to $a_0=0$.

If $b(t)$ does not change sign and it is not identically zero, then $x=0$ is either a lower or an upper solution, so there are no periodic solutions that change sign. That is, there are only linear centers and they are characterized by $A=1,B=0$ when the periodic solutions are positive and $\bar A=1,\bar B=0$ when the periodic solutions are negative.

Assume that $b(t)$ changes sign. First we prove that if \eqref{eq:abs} has a center that is not linear, that is, if there exists a continuum of periodic solutions that change sign, then \eqref{eq:abs} has a global center. Recall that we can assume that $b(0)=b(\bar t)=0$ are the unique zeroes of $b(t)$ in $[0,2\pi)$, and that $b(t)>0$ for $t\in (0,\bar t)$ and $b(t)<0$ for $t\in (\bar t,2\pi)$. Then there is a continuum of periodic solutions crossing $x=0$ twice, one in $(0,\bar t)$ and one in $(\bar t,2\pi)$. By analyticity, these crossing must cover all $(0,\bar t)$ and $(\bar t,2\pi)$. In particular, the value at $t=0$ of these solutions is an interval of the form $(\bar x,0)$, where $u(t,\bar x)$ and $u(t,0)$ do not change sign, and the displacement application $d$ is identically zero in the interval $(\bar x,0)$ and linear in $(-\infty,\bar x ]\cup [0,+\infty)$. However, since $d$ is differentiable, we conclude that $d$ is identically zero, and thus the center is global.

Next we show that the global centers are characterized by the fact that $a_0=0$ and that the functions $a(t)$ and $b(t)$ are proportional. Assume that \eqref{eq:abs} has a global center. As for $x\gg 0$, $d(x)= (A-1)x+B$, then $A=1$, so $a_0=0$. Also, since all solutions are periodic, solutions that change sign are, then $T(t):=T^+(t)=T^-(t)$ for every $t\in[0,\bar t]$. 
Therefore, the 
vector fields of~\eqref{eq:f} and \eqref{eq:g}, coincide, so 
\[
\int_t^{T(t)} a(s)\,ds = \int_{t+2\pi}^{T(t)} -a(s)\,ds.
\]
for every $t\in (0,\bar t)$. But, as $a_0=0$,
\[
\begin{split}
0 = 
\int_t^{T(t)} a(s)\,ds + \int_{t+2\pi}^{T(t)} a(s)\,ds
&= 2 \int_t^{T(t)} a(s)\,ds + 
\int_{t+2\pi}^t a(s)\,ds \\ 
& = 2 \int_t^{T(t)} a(s)\,ds.
\end{split}
\]
Considering that for some constants $c\neq 0$ and $t_a\in (0 ,2 \pi)$, $a(t) = a_1 \cos(t)+a_2\sin(t)=c\sin(t-t_a)$, we have that $\cos(t-t_a)=\cos(T(t)-t_a)$, which implies that $T(t)=-t+2\pi$ because $0\leq t<\bar t< T(t)\leq 2\pi$, $T(t)$ is strictly decreasing and $T(0)=T(2\pi)$. If we now derive the expression $\int_t^{-t+2\pi}a(s)\,ds=0$ with respect to $t$, we obtain $a(t)+a(-t+2\pi)=0$, implying $a(t)=a_2 \sin(t)$. Moreover, $T(t) = 2\pi-t$ also implies that $\bar t=\pi$, so $b(t)$ is proportional to $\sin(t)$, and $a_2 b(t)  = a_2 \sin(t) = a(t)$

Conversely, if $a_0=0$ and $b(t)=\lambda a(t) \not\equiv 0$, then there exist $c>0$ and $\gamma \in[0,2\pi)$ such that $a(t)=c\sin(t-\gamma)$. By the change of variables $t\to t+\gamma,\ x\to x/c$, equation \eqref{eq:abs} transforms into $x'=\sin(t)|x|+\lambda \sin(t)$ which has a center because $u(t)=u(-t)$ for every solution $u(t)$. 
\end{proof}

\subsection{The finiteness problem}

The finiteness problem is essentially solved in Theorem~\ref{theo:finite} of \cite{BFT}. Thus, we only show here that \eqref{eq:abs} satisfies the hypotheses of that theorem.

\begin{theo}
Equation \ref{eq:abs}  has a finite number of limit cycles.
\end{theo}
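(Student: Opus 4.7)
My plan is to reduce the statement to Theorem~\ref{theo:finite}, whose hypotheses I would check one by one. Reading \eqref{eq:abs} in the form $f(t,x)=a(t)x+b(t)$ for $x\geq 0$ and $g(t,x)=-a(t)x+b(t)$ for $x<0$, both $f$ and $g$ are trigonometric polynomials in $t$, hence real analytic and $2\pi$-periodic, and $f(t,0)=g(t,0)=b(t)$. Thus the only non-trivial hypothesis left to verify is that $b(t)$ has only simple zeros on $[0,2\pi)$.

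Before addressing that, I would dispose of the degenerate cases in which $b(t)$ does not change sign. If $b\equiv 0$, the line $x=0$ is itself a solution and every periodic solution has constant sign, so the dynamics reduces to the homogeneous linear equations $x'=\pm a(t)x$ and produces no isolated periodic solution. If $b\not\equiv 0$ but keeps definite sign, then $x=0$ is a strict lower or upper solution and periodic solutions again have constant sign; for $|x|$ large the displacement map is affine, as observed in the center-problem subsection, so at most one limit cycle lies on each side.

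In the remaining case, $b(t)$ changes sign. Writing $b(t)=b_0+R\sin(t+\phi)$ with $R=\sqrt{b_1^2+b_2^2}>0$, a double zero on $[0,2\pi)$ would require $|b_0|=R$, which would force $b$ to keep constant sign; hence the sign-change hypothesis already guarantees that $b$ has exactly $n=2$ simple zeros on $[0,2\pi)$. Theorem~\ref{theo:finite} then applies with $T=2\pi$ and $n=2$: the sign-changing periodic solutions have initial values in a compact interval $[x_1,x_r]$ with $r\leq 4$, and on each of the at most three subintervals $[x_k,x_{k+1}]$ either there are only finitely many periodic solutions, or every point yields one. Since a continuum contributes no isolated solutions, in either case the sign-changing limit cycles are finitely many; combined with the constant-sign bound, \eqref{eq:abs} has finitely many limit cycles.

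I do not anticipate any serious obstacle. The only point requiring a moment's care is the exclusion of double zeros of $b(t)$ when it changes sign, which is immediate from the sinusoidal form above; everything else is bookkeeping that combines Theorem~\ref{theo:finite} with the explicit linear behaviour of the displacement map for $|x|$ large.
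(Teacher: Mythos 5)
Your proposal is correct and follows essentially the same route as the paper: when $b(t)$ changes sign its zeros are automatically simple (a multiple zero of a linear trigonometric $b$ forces definite sign), so Theorem~\ref{theo:finite} applies, while the non-sign-changing cases are handled by $x=0$ being a lower/upper solution and by linearity of the equation on each half-plane. The only small slip is the claim that $b\equiv 0$ produces no isolated periodic solution: for $a_0\neq 0$ the solution $x\equiv 0$ is itself an isolated periodic solution, but this still gives at most one limit cycle, so the finiteness conclusion is unaffected.
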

\begin{proof}
If the zeroes of $b(t)$ are simple, then the result follows from Theorem~\ref{theo:finite}. Now, as $b(t)$ is linear trigonometric, if $b(t)$ has multiple zero, then it does not change sign, so $x=0$ is a lower or an upper solution. In particular, no periodic solution crosses $x=0$, and there is at most one positive and one negative periodic solution because the equation is linear in these regions.
\end{proof}

\subsection{Hilbert's 16th problem}

When $a(t)$ or $b(t)$ does not change sign, an uniform upper bound is $2$ (see \cite{GZ,Ma}), so we assume that $a(t)$ and $b(t)$ change sign. 

If we denote \[F(t,x,a_0,b_0):=\big(a_0+a_1 \cos(t)+a_2 \sin(t)\big)|x|+b_0\big(1-\cos(t)\big)+\sin(t),\] then $\frac{\partial F}{\partial a_0}=|x|\geq0$ and $\frac{\partial F}{\partial b_0}=1-\cos(t)\geq0$, being strict inequalities for $x\neq0$ and $t\neq 2k\pi$, $k\in \mathbb{Z}$, respectively. Thus, since the vector field defined by \eqref{eq:abs} is increasing with respect to $a_0$ and $b_0$, the same is true for the displacement application, that is, if $d(x,a_0)$ (resp. $d(x,b_0)$) denotes the displacement application in terms of $a_0$ (resp. $b_0$), then $d$ is strictly increasing with respect to $a_0$ (resp. $b_0$).

\begin{prop}
Assume that~\eqref{eq:abs} has at least $n$ limit cycles for a certain value of $a_0$ (resp. $b_0$). Then there exist nearby values of $a_0$ (resp. $b_0$) such that~\eqref{eq:abs} has at least $n$ limit cycles. 
\end{prop}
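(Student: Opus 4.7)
The plan is to exploit the strict monotonicity of the displacement map $d(x, a_0)$ in $a_0$ established just above; the argument for $b_0$ is identical. Let $x_1 < \ldots < x_n$ be the isolated zeros of $d(\cdot, a_0^*)$ corresponding to the $n$ limit cycles at $a_0 = a_0^*$, and pick pairwise disjoint compact neighborhoods $I_i \ni x_i$ on which $x_i$ is the unique zero. I will classify each $x_i$ according to the sign pattern of $d(\cdot, a_0^*)$ on $I_i\setminus\{x_i\}$ into three types: \emph{sign-changing} (opposite signs on the two sides), \emph{local-minimum type} ($d > 0$ on both sides), and \emph{local-maximum type} ($d < 0$ on both sides), with respective counts $k_s$, $k_+$, $k_-$ satisfying $k_s + k_+ + k_- = n$.

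Next I would choose the direction of perturbation according to the balance between $k_+$ and $k_-$: take $a_0 = a_0^* - \epsilon$ if $k_+ \geq k_-$, and $a_0 = a_0^* + \epsilon$ otherwise. By symmetry it suffices to treat the first case, in which strict monotonicity yields $d(x, a_0^* - \epsilon) < d(x, a_0^*)$ for every $x$. Counting zeros of $d(\cdot, a_0^* - \epsilon)$ in each $I_i$: at a sign-changing $x_i$, the opposite-sign configuration at $\partial I_i$ persists for $\epsilon$ small by joint continuity, producing at least one zero; at a local-minimum $x_i$, $d(\cdot, a_0^*)$ is bounded below by some $m > 0$ on $\partial I_i$, so it remains positive there after a small perturbation, while $d(x_i, a_0^* - \epsilon) < 0$ by strict monotonicity, and the intermediate value theorem produces at least two zeros; at a local-maximum $x_i$, the zero may disappear altogether. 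The total is therefore at least $k_s + 2k_+ = n + (k_+ - k_-) \geq n$, and the $I_i$ being pairwise disjoint these zeros are distinct.

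To conclude that they correspond to limit cycles I need them to be isolated zeros of $d(\cdot, a_0^* - \epsilon)$. The center classification of the first subsection identifies the exceptional parameter set producing continua of periodic solutions; intersecting this analytic set with the one-parameter family $a_0 \in (a_0^* - \delta, a_0^* + \delta)$ yields at most a discrete set of bad $\epsilon$, outside of which each zero found above is isolated and hence a limit cycle. The main obstacle is the combinatorial choice of direction: a single one-sided perturbation cannot simultaneously split every non-sign-changing zero into two, so the argument hinges on balancing the gains from local-minimum zeros against the losses from local-maximum zeros, and the choice of sign according to $\sgn(k_+ - k_-)$ is precisely what produces a net nonnegative change in the number of isolated zeros.
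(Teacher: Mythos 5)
Your argument is correct and follows essentially the same route as the paper's proof: strict monotonicity of $d$ in $a_0$ (resp.\ $b_0$), persistence of sign-changing zeros, splitting of non-sign-changing zeros into two under a perturbation of the appropriate sign, and a choice of the perturbation direction that balances the two non-sign-changing types. Your explicit bookkeeping with $k_s$, $k_+$, $k_-$ and the remark that isolatedness is guaranteed away from the exceptional (center) parameter values simply make precise what the paper compresses into ``by conveniently selecting $a_0$, we get the desired result.''
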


\begin{proof}
Assume $\bar x$ is a zero of $d$ such that $d$ changes sign. Then $d$ changes sign for nearby values of $a_0$ for any sufficiently small neighborhood of $\bar x$.

Assume $\bar x$ is a zero of $d$ such that $d$ does not change sign for $a_0=\bar a_0$; for instance, $d(x,\bar a_0)\geq 0$ for $x$ in a neighborhood of $\bar x$, being the strict inequality for $x\neq \bar x$. Then for $a_0<\bar a_0$ sufficiently small, $d$ has at least two zeroes close to $\bar x$. 

Now, by conveniently selecting $a_0$, we get the desired result.  
\end{proof}

From the previous proposition, to obtain a uniform upper bound for the limit cycles of \eqref{eq:abs}, we can exclude any finite number of values of $a_0$ and $b_0$. In particular, in the following we assume that $a_0 \neq 0$.

When $b(t)$ changes sign, it has two simple roots in $[0,2\pi)$. By a translation of time, it is not restrictive to assume that $0$ is one of them and that $b'(0)>0$. Moreover, by rescaling $x$ we can assume that the coefficient of $\sin(t)$ is one, so we assume that
\[
b(t) = \sin(t) + b_0(1-\cos(t)).
\] 
Moreover, we have that $b(t)>0$ for $t\in(0,\bar t)$ and $b(t)<0$ for $t\in (\bar t,2\pi)$, where $\bar t$ is the zero of $b(t)$ in $(0,2\pi)$. By Proposition~\ref{prop:1}, there is a one-to-one correspondence between the periodic solutions of \eqref{eq:abs} that change sign and the solutions of $T^+(t)=T^-(t)$.

Unfortunately, the equation $T^+(t)=T^-(t)$ is difficult to handle, since $T^+,T^-$ are implicitly defined functions. In the following steps, the problem of bounding the number of solutions to this equation is reduced to bounding the number of solutions to simpler equations. For this, we first consider the function
\begin{equation*}
h(t,x) = \int_t^x a(s)\,ds + \int_{t+2\pi}^x a(s)\,ds. 
\end{equation*}	

\begin{prop}\label{prop:der1}
Let $n-1$ be the maximum number of isolated points in the intersection of $h(t,x)=0$ with the graph of a solution of \eqref{eq:f} in $(0,\bar t)\times (\bar t,2\pi)$. Then the number of limit cycles of \eqref{eq:abs} that change sign is less than or equal to $n$.
\end{prop}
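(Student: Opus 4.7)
The plan is to reduce bounding the zeros of $\phi(t):=T^+(t)-T^-(t)$ on $(0,\bar t)$ to bounding the zeros of $\psi(t):=h(t,T^+(t))$, via a Rolle-type comparison at the points where $\phi$ vanishes. By Proposition~\ref{prop:1}, $\phi$ has exactly as many zeros as there are limit cycles of \eqref{eq:abs} that change sign, while the intersection of $\{h=0\}$ with the graph of $T^+$ is by definition the zero set of $\psi$; the hypothesis then says that $\psi$ has at most $n-1$ isolated zeros in $(0,\bar t)$.

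The key computation is to differentiate the two expressions for $T^{\pm\prime}$ supplied by Lemma~\ref{lema:Ts} and evaluate the difference at a zero $t_0$ of $\phi$. Writing $x_0:=T^+(t_0)=T^-(t_0)$, one obtains
\[
\phi'(t_0)=\frac{b(t_0)}{b(x_0)}\left[\exp\!\left(\int_{t_0}^{x_0}a(s)\,ds\right)-\exp\!\left(\int_{x_0}^{t_0+2\pi}a(s)\,ds\right)\right],
\]
and factoring a common exponential shows that this equals $\frac{b(t_0)}{b(x_0)}\,e^{v}\bigl(e^{h(t_0,x_0)}-1\bigr)$ for a suitable $v$. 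Since $t_0\in(0,\bar t)$ and $x_0\in(\bar t,2\pi)$ force $b(t_0)>0>b(x_0)$, this identity gives both $\phi'(t_0)=0 \iff \psi(t_0)=0$ and, when nonzero, $\sgn\phi'(t_0)=-\sgn\psi(t_0)$.

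Now suppose $\phi$ has $N$ isolated zeros $t_1<\cdots<t_N$. If every such zero is simple, then $\phi$ changes sign at each $t_i$, so $\phi'(t_i)$ alternates in sign along $i=1,\dots,N$, and therefore so does $\psi(t_i)$; by continuity of $\psi$ and the Intermediate Value Theorem, each of the $N-1$ open intervals $(t_i,t_{i+1})$ contains an isolated zero of $\psi$. If some $t_i$ is instead a multiple zero of $\phi$, the formula above forces $\psi(t_i)=0$, so this $t_i$ is already counted among the isolated zeros of $\psi$ and the bookkeeping still produces at least $N-1$ distinct zeros of $\psi$. Combining with the hypothesis, $N-1\leq n-1$, i.e., $N\leq n$.

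The one delicate point is the careful handling of multiple zeros of $\phi$, but the derivation of $\phi'(t_0)$ in terms of $\psi(t_0)$ shows transparently that every such zero is automatically a zero of $\psi$, which is precisely what the comparison argument needs; this is why the bound is $n$ rather than $n-1$.
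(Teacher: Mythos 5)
Your argument is essentially the paper's own: the paper likewise uses that $T^+$ and $T^-$ solve \eqref{eq:f} and \eqref{eq:g}, that the two vector fields are proportional exactly on $h(t,x)=0$, and then runs a Rolle/intermediate-value count between consecutive solutions of $T^+(t)=T^-(t)$ along the graph of a solution of \eqref{eq:f}; your explicit identity $\sgn\phi'(t_0)=-\sgn h(t_0,T^+(t_0))$ is just a quantitative form of that proportionality criterion. The one loose point in your write-up --- asserting that tangential coincidences (multiple zeros of $\phi$, where the forced zero of $\psi$ sits at a shared endpoint $t_i$) still yield $N-1$ distinct zeros --- is handled no more carefully in the paper's proof, so your proposal matches it in both substance and level of rigor.
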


\begin{proof}
First, we recall that, by Lemma \ref{lema:Ts}, $T^+(t)$ and $T^-(t)$ are solutions of \eqref{eq:f} and \eqref{eq:g} respectively. Consider the vector fields defined by \eqref{eq:f} and \eqref{eq:g}. 
The set of points  $(t,x) \in (0,\bar t) \times (\bar t,2\pi)$ where the two vector fields are proportional are the solutions of 
\[
-\frac{b(t)}{b(x)}\exp\left(\int_t^x a(s)\,ds\right) 
=-\frac{b(t)}{b(x)}\exp\left(\int_{t+2\pi}^x -a(s)\,ds\right). 
\]
This equation is equivalent to $h(t,x)=0.$

By Proposition~\ref{prop:1}, the number of limit cycles of \eqref{eq:abs} that change sign is the number of solutions of $T^+(t)=T^-(t)$, which is bounded by the number of intersections of  the graph of a solution of \eqref{eq:f} with the graph of a solution of \eqref{eq:g}, both restricted to $(0,\bar t)\times (\bar t,2\pi)$. Let $u(t)$ and $v(t)$ be solutions of \eqref{eq:f}, \eqref{eq:g}, respectively. If the graphs of $x=u(t)$ and $x=v(t)$ intersect in two consecutive points of $u(t)$, then both vector fields are proportional in a intermediate value, that is the graph of $x=u(t)$ and $h(t,x)=0$ intersect at one point. Therefore, if $n-1$ is the number of intersections of the graph of $x=u(t)$ and $h(t,x)=0$, then the number of intersections of the graphs of $x=u(t)$ and $x=v(t)$ is at most $n$. 
\end{proof}

To control the number of intersections of solutions of \eqref{eq:f} or \eqref{eq:g} with $h(t,x)=0$ we need some additional properties of $h(t,x)=0$. Note that 
\begin{equation}
\begin{split}
h(t,x)=&   \int_t^x a(s)\,ds +
\int_{t+2\pi}^x a(s)\,ds \\=& 
2 \int_t^x a(s)\,ds 
+ \int_{t+2\pi}^t a(s)\,ds\\=&2 \int_t^x a(s)\,ds - 2\pi a_0.
\end{split}
\end{equation}

\begin{lema}\label{le:h0}
The graph of $h(t,x) = 0$ for $(t,x)\in R:=(0,\bar t)\times (\bar t, 2\pi)$ is, after a linear change of variables, the restriction to $R$ of the graph of a function.
\end{lema}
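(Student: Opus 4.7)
The plan is to apply the linear change of variables $(\alpha,\beta):=\bigl((x-t)/2,(x+t)/2\bigr)$ and rewrite $h(t,x)=0$ explicitly. From
\[
h(t,x) = 2a_0(x-t) + 2a_1(\sin x - \sin t) - 2a_2(\cos x - \cos t) - 2\pi a_0,
\]
the sum-to-product identities $\sin x-\sin t = 2\cos\beta\sin\alpha$ and $\cos x-\cos t = -2\sin\beta\sin\alpha$ convert $h=0$ into
\[
a_0(2\alpha-\pi) + 2r\sin\alpha\,\cos(\beta-\phi) = 0,
\]
with $r=\sqrt{a_1^2+a_2^2}$ and $\phi$ chosen so that $a_1=r\cos\phi,\ a_2=r\sin\phi$. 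The degenerate case $r=0$ reduces the curve to the straight line $x=t+\pi$, which is trivially a graph.

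For $(t,x)\in R$ one has $\alpha\in(0,\pi)$, so $\sin\alpha>0$, and the equation above is equivalent to
\[
\cos(\beta-\phi) = G(\alpha) := \frac{a_0(\pi - 2\alpha)}{2r\sin\alpha}.
\]
The crux of the proof is to establish that $G\colon(0,\pi)\to\mathbb{R}$ is a strictly monotone bijection. A direct differentiation gives
\[
G'(\alpha) = -\frac{a_0\,f(\alpha)}{2r\sin^2\alpha}, \qquad f(\alpha) := 2\sin\alpha + (\pi-2\alpha)\cos\alpha,
\]
so monotonicity of $G$ reduces to the positivity of $f$ on $(0,\pi)$. This positivity is the essential technical obstacle, but it follows from the elementary observation that $f'(\alpha)=-(\pi-2\alpha)\sin\alpha$ vanishes on $(0,\pi)$ only at $\alpha=\pi/2$, combined with $f(\pi/2)=2$ (a global minimum) and $f(0)=f(\pi)=\pi>0$ at the endpoints.

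Once $G$ is known to be strictly monotone, the limits $G(\alpha)\to\pm\infty$ as $\alpha\to0^+$ and $G(\alpha)\to\mp\infty$ as $\alpha\to\pi^-$ (the signs depending on $\mathrm{sgn}(a_0)$) show that $G$ is a bijection onto $\mathbb{R}$, so $G^{-1}$ is well defined on $[-1,1]$. Hence for every $\beta$ with $|\cos(\beta-\phi)|\le 1$, the curve equation determines a unique $\alpha = G^{-1}\bigl(\cos(\beta-\phi)\bigr)$, and the curve $\{h=0\}\cap R$ coincides, in the new $(\alpha,\beta)$ coordinates, with the restriction to the image of $R$ of the graph of the function $\beta\mapsto G^{-1}(\cos(\beta-\phi))$. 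Since $(t,x)\mapsto(\alpha,\beta)$ is a linear automorphism of $\mathbb{R}^2$, this completes the proof.
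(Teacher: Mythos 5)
Your proof is correct and follows essentially the same route as the paper: the same linear change of variables (up to a factor $1/2$ and the standing assumption $a_0\neq 0$), and the same reduction to the strict monotonicity of $\alpha\mapsto(\pi-2\alpha)/\sin\alpha$ on $(0,\pi)$, which in the paper appears as $(\pi-z_2)/\sin(z_2/2)$ on $(0,2\pi)$. The only difference is that you actually verify this monotonicity (via the positivity of $f(\alpha)=2\sin\alpha+(\pi-2\alpha)\cos\alpha$), a fact the paper asserts without proof.
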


\begin{proof}
As $a_0 \neq 0$, then \[\frac{1}{a_0} h(t,x) = 2 \left( r_1 \sin(x)-r_2 \cos(x)+x-r_1 \sin(t) + r_2 \cos(t)-t\right) -2 \pi\] with $r_i = a_i/a_0,\ i = 1,2$. 

By the  change of variables:
\[
\left(\begin{array}{c} z_1 \\ z_2 \end{array}\right) = 
\left(\begin{array}{rr} 1 & 1 \\ -1 & 1  \end{array}\right) \left(\begin{array}{c} t \\ x \end{array}\right),
\]
$\frac{1}{a_0} h(t,x)$ can be written in the form
\[
\frac{1}{a_0} h(z_1,z_2)=4 \left(r_1 \cos(z_1/2) + r_2 \sin(z_1/2)\right) \sin(z_2/2) - 2\pi + 2z_2.
\]
Thus the graph of the equation $h(t,x)=0$, restricted to $0<t<\bar t$ and $\bar t<x<2\pi$, becomes in part of the (bigger) graph of 
\begin{equation}\label{ecu:hz}
\frac{\pi - z_2}{\sin(z_2/2)} = 2\left(r_1 \cos(z_1/2) + r_2 \sin(z_1/2)\right)
\end{equation}
restricted to $\bar t < z_1 < \bar t +  2\pi$ and $0 < z_2 < 2 \pi$. Now, since the left hand side of \eqref{ecu:hz} is a strictly decreasing function in $(0, 2\pi)$, we conclude that $z_2$ is a function of $z_1$.
\end{proof}

Now, by studying the tangencies of $T^+$ along the curve $h(t,x)=0$, it is possible to further simplify the problem. Let us denote \[m(t,x):=a(t)b(x)-a(x)b(t)e^{\pi a_0}.\]

\begin{prop}\label{prop:der2}
Let $n-2$ be the number of isolated solutions of 
\[
h(t,x)= m(t,x)=0,
\]
where
$0<x-t<2\pi$ and $\bar t<t+x<\bar t+2\pi$.
Then the number of limit cycles of \eqref{eq:abs} that change sign is less than or equal to $n$.
\end{prop}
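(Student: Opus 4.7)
The plan is to extend the strategy of Proposition~\ref{prop:der1} one step further, by showing that the maximum number $N$ of isolated intersections of the graph of a solution of \eqref{eq:f} with the curve $h=0$ in $R=(0,\bar t)\times(\bar t,2\pi)$ satisfies $N\le n-1$; combined with Proposition~\ref{prop:der1}, this yields the claimed bound $n$ on the number of limit cycles.

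Fix a solution $u(t)$ of \eqref{eq:f} and set $\Psi(t)=h(t,u(t))$, an analytic function whose isolated zeros in $(0,\bar t)$ are precisely the intersections I wish to count. Differentiating and using \eqref{eq:f}, a routine computation gives
\[
\Psi'(t) \;=\; -2a(t) + \frac{2 a(u(t))\, b(t)}{b(u(t))} \exp\left(\int_t^{u(t)} a(s)\, ds\right).
\]
At a zero $\tau$ of $\Psi$ the exponent above equals $\pi a_0$, so this simplifies to $\Psi'(\tau)=-2 m(\tau,u(\tau))/b(u(\tau))$. Therefore the multiple zeros of $\Psi$, equivalently the points at which $u$ is tangent to $h=0$, are exactly those $(\tau,u(\tau))$ with $h(\tau,u(\tau))=m(\tau,u(\tau))=0$, and their number is bounded above by the $n-2$ isolated common solutions in the hypothesis.

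For the simple zeros of $\Psi$ I would mimic the intermediate-value argument used in Proposition~\ref{prop:der1}. Between two consecutive simple zeros $\tau_1<\tau_2$ of $\Psi$, the values $\Psi'(\tau_1)$ and $\Psi'(\tau_2)$ have opposite signs, and since $b(u(\cdot))$ keeps its sign on that interval (as $b$ vanishes only at $0$ and $\bar t$), the map $t\mapsto m(t,u(t))$ must change sign as well. To turn this sign change into a genuine isolated common zero of $h$ and $m$ inside the rectangle $\{0<x-t<2\pi,\ \bar t<t+x<\bar t+2\pi\}$, I would pass to the coordinates $(z_1,z_2)=(t+x,-t+x)$ of Lemma~\ref{le:h0}, in which $h=0$ becomes the graph $z_2=\varphi(z_1)$ over an interval inside that very rectangle. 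Rephrasing the argument along this graph, rather than along $u$, puts the zero of $m$ produced by the intermediate value theorem back onto $h=0$, identifying it with one of the isolated $h=m=0$ points that is not already a tangency of $u$.

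The main obstacle is this last step. The derivative identity $\Psi'(\tau)=-2m(\tau,u(\tau))/b(u(\tau))$ holds only on $\{\Psi=0\}$, so a naive use of the intermediate value theorem between two simple zeros of $\Psi$ only yields a zero of $m$ along the trajectory of $u$, not a point on $h=0$. Converting this into a true common zero of $h$ and $m$ in the stated region is where the graph structure of $h=0$ coming from Lemma~\ref{le:h0} is indispensable, and it also explains why the hypothesis uses the enlarged rectangle $\{0<x-t<2\pi,\ \bar t<t+x<\bar t+2\pi\}$: this is exactly the natural $(z_1,z_2)$-domain of the graph $z_2=\varphi(z_1)$, so that every intermediate tangency produced by the argument lies inside the counted region. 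Granting this identification, each pair of consecutive isolated zeros of $\Psi$ produces a distinct isolated $h=m=0$ point, giving at most $(n-2)+1=n-1$ isolated zeros of $\Psi$, and Proposition~\ref{prop:der1} then concludes.
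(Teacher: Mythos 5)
Your route is essentially the paper's: you reduce the count to contacts of the field \eqref{eq:f} with the curve $h=0$, and your identity $\Psi'(\tau)=-2m(\tau,u(\tau))/b(u(\tau))$ at zeros of $\Psi$ is exactly the paper's observation that, on $h=0$, the tangency function $l(t,x)=h_x(t,x)\,b(t)\exp\bigl(\int_t^x a(s)\,ds\bigr)+h_t(t,x)\,b(x)$ reduces to $-2m(t,x)$. But the step you single out as the ``main obstacle'' and then take for granted is precisely what Lemma~\ref{le:h0} is for, and you already hold all the ingredients to close it: nothing has to be ``moved'' from the trajectory onto $h=0$, because your sign information is evaluated at the two crossing points $(\tau_1,u(\tau_1))$ and $(\tau_2,u(\tau_2))$, which themselves lie on $h=0$, and $m$ takes opposite signs there (since $\Psi'(\tau_1)\Psi'(\tau_2)<0$ and $b(u(\tau_i))<0$). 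By Lemma~\ref{le:h0}, in the coordinates $(z_1,z_2)=(t+x,x-t)$ the set $h=0$ is the graph of a continuous function of $z_1\in(\bar t,\bar t+2\pi)$ with values in $(0,2\pi)$, hence a connected arc entirely contained in the region $0<x-t<2\pi$, $\bar t<t+x<\bar t+2\pi$ (this is why the proposition uses the enlarged region, as you correctly guessed). Applying the intermediate value theorem to $m$ restricted to that arc, between the two crossing points, yields a point with $h=m=0$ in the stated region; this is the paper's ``between two consecutive intersections the field must be tangent to $h=0$'' step. Combined with your remark that a multiple zero of $\Psi$ is already a point of $h=m=0$, this gives at most $n-1$ isolated intersections of a solution of \eqref{eq:f} with $h=0$, and Proposition~\ref{prop:der1} concludes, as you say.

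One residual fine point, which you share with the paper's own proof rather than introduce: to be sure that different pairs of consecutive crossings produce \emph{distinct} points of $h=m=0$, one needs the crossings to be met along the curve $h=0$ in a consistent order (consecutive in $t$ along the solution need not be consecutive in $z_1$ along the graph). The published proof passes over this at the same level of brevity, so once you make the intermediate-value step above explicit, your argument is on par with the paper's.
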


\begin{proof}
By Lemma~\ref{le:h0}, $h(t,x)=0$ is the graph of a function in the region $0<x-t<2\pi$ and $\bar t<t+x<\bar t+2\pi$. Then, between any two consecutive intersections of $h(t,x)=0$ and   the graph of a solution of \eqref{eq:f}, the vector field defined by \eqref{eq:f} must be tangent to the curve, that is, for some point of $h(t,x)=0$,
$$ l(t,x):=h_x(t,x) b(t) \exp\left(\int_t^x a(s)\,ds\right)+h_t(t,x)b(x)=0.$$
Moreover, since $h(t,x)=0$ is equivalent to $\int_t^x a(s)\,ds=\pi a_0$, the set of points such that $l(t,x)=h(t,x)=0$ is the set of points such that
\[
h(t,x) =a(t)b(x)-a(x)b(t)e^{\pi a_0}=0.
\]
Therefore, if the number of isolated intersections of the curve $h(t,x)= 0$ with $m(t,x)=0$ is at most $n-2$, then the number of isolated intersections of the curve $h(t,x)=0$ with the graph of  a solution of \eqref{eq:f} in $(0,\bar t)\times (\bar t,2\pi)$ is at most $n-1$ and, by  Proposition~\ref{prop:der1}, we are done. 
\end{proof}

The following result guarantees finiteness and a (very thick) upper bound for the number of limit cycles. To obtain this bound, we use the following Khovanskii theorem (\cite[Section 1.4]{fewnomials}), which gives an estimate of the number of solutions of a system of real exponential and trigonometric polynomials.

\begin{theo}{(Khovanski\u{\i})}. Let $P_1 = \ldots = P_n = 0$ be a system of $n$ equations with $n$ real unknowns $x = x_1, \ldots, x_n$, where $P_i$ is a polynomial of degree $m_i$ in $n+k+2\rho$ real variables $x, y_1, \ldots, y_k,$ $u_1, \ldots, u_\rho, v_1, \ldots, v_\rho$, where $y_j = \operatorname{exp}\langle a_j, x \rangle,\ j = 1, \ldots, k$ and $u_q = \sin \langle b_q, x \rangle,$ $v_q = \cos \langle b_q, x \rangle,\ q = 1, \ldots, \rho$. Then the number of isolated real solutions of this system in the region bounded by the inequalities $\vert \langle b_q, x \rangle \vert < \pi/2,\ q = 1, \ldots, \rho$ is finite and less than
\[
m_1 \cdots m_n \left(\sum_{i=1}^n m_i + \rho + 1\right)^{\rho+k} 2^{\rho + (\rho + k)(\rho + k -1)/2}
\]
\end{theo}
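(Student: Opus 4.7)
The plan is to prove Khovanski\u{\i}'s theorem by induction on the complexity parameter $k+\rho$ that counts the number of transcendental generators in the system, using a generalized Rolle-type argument (the Khovanski\u{\i}--Rolle lemma for pfaffian chains) as the main engine and Bezout's theorem as the base case.

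For the base case $k=\rho=0$, the system $P_1=\cdots=P_n=0$ is purely polynomial of total degrees $m_1,\ldots,m_n$ in $n$ real variables, and Bezout's theorem bounds the number of isolated real solutions by $m_1\cdots m_n$, which matches the stated estimate since the trailing factor collapses to $1$.

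For the inductive step, I would enlarge the ambient space by treating $y_j=\exp\langle a_j,x\rangle$ and $(u_q,v_q)=(\sin\langle b_q,x\rangle,\cos\langle b_q,x\rangle)$ as independent coordinates subject to the pfaffian relations
\[
dy_j=y_j\,\langle a_j,dx\rangle,\qquad du_q=v_q\,\langle b_q,dx\rangle,\qquad dv_q=-u_q\,\langle b_q,dx\rangle,
\]
together with the algebraic constraints $u_q^2+v_q^2=1$. The region $\lvert\langle b_q,x\rangle\rvert<\pi/2$ ensures that these coordinates are single-valued and that the pfaffian chain is well-defined. The key technical tool is the Khovanski\u{\i}--Rolle lemma: if $U$ is a bounded domain and $X$ is a vector field tangent to the level sets of all but one of the equations and non-vanishing on $\partial U$, then the number of isolated zeros of the remaining equation along the integral curves of $X$ is bounded by one plus the number of zeros of its Lie derivative $X(\cdot)$. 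Applied iteratively, this lemma allows us to replace one transcendental equation at a time by a Jacobian-determinant equation of controlled higher polynomial degree, thus reducing $k+\rho$ by one at each step.

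After $k+\rho$ such reductions the system becomes purely polynomial and Bezout applies. The main obstacle, and where the combinatorial bookkeeping is delicate, is tracking two separate sources of growth simultaneously: the degrees of the successive Jacobian equations grow roughly linearly, increasing by $\sum m_i+\rho$ at each reduction step and producing the factor $(\sum_{i=1}^n m_i+\rho+1)^{\rho+k}$; meanwhile, the circular constraints $u_q^2+v_q^2=1$ force a doubling of solution counts along each of the $\rho$ trigonometric reductions, and the iterated degeneration of the auxiliary vector fields at intermediate stages contributes an additional $2^{(\rho+k)(\rho+k-1)/2}$ factor. Collecting these multiplicities yields the stated bound. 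The proof one writes out has to handle the non-compactness of the domain and the possibility of degenerate zeros by a standard perturbation argument, which reduces matters to the generic case in which all zeros are isolated and simple.
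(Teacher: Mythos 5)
This statement is not proved in the paper at all: it is quoted verbatim, with attribution, from Khovanski\u{\i}'s \emph{Fewnomials} \cite[Section 1.4]{fewnomials}, and the paper only \emph{uses} it (in the proof of Theorem~\ref{theo:h16}) to count solutions of $h=m=0$. So the relevant comparison is with Khovanski\u{\i}'s own argument, and your outline does follow that route: a Pfaffian chain for $y_j,u_q,v_q$, iterated Khovanski\u{\i}--Rolle reductions replacing one transcendental relation at a time by a Jacobian equation, Bezout as the base case, and a perturbation to the nondegenerate situation.

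The problem is that the outline stops exactly where the content of the theorem lies, namely the explicit constant. You never write down the degrees of the successive Jacobian systems, so the claim that the growth ``increases by $\sum m_i+\rho$ at each reduction step'' and that everything ``collects'' to $m_1\cdots m_n\left(\sum_i m_i+\rho+1\right)^{\rho+k}2^{\rho+(\rho+k)(\rho+k-1)/2}$ is an assertion, not a derivation; moreover your attribution of the factors is partly off. In Khovanski\u{\i}'s accounting the factor $2^{\rho}$ comes from adjoining the $\rho$ degree-two relations $u_q^2+v_q^2=1$ to the final algebraic system (Bezout picks up a factor $2$ per relation), while the factor $\left(\sum_i m_i+\rho+1\right)^{\rho+k}2^{(\rho+k)(\rho+k-1)/2}$ is produced by the degree bookkeeping across the $\rho+k$ Rolle--Khovanski\u{\i} steps, not by any ``degeneration of the auxiliary vector fields.'' Two further points are glossed over: the hypothesis $\vert\langle b_q,x\rangle\vert<\pi/2$ is not merely for single-valuedness --- it is what makes the trigonometric entries part of a genuine Pfaffian chain (via the tangent, which satisfies a polynomial ODE on that strip), and the Rolle-type estimate on a non-compact region needs an actual argument controlling behaviour at infinity (or an exhaustion), not just the remark that a ``standard perturbation argument'' handles it. As written, the proposal is a correct description of the strategy of the cited proof, but it does not establish the stated bound.
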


\begin{theo}\label{theo:h16}
The number of limit cycles of \eqref{eq:abs} is less than or equal to $9834500 = 7^4 \cdot 2^{12}+2+2$.    
\end{theo}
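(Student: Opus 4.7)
The plan is to apply Proposition~\ref{prop:der2}, which reduces the problem to counting isolated solutions of the system $h(t,x) = 0$, $m(t,x) = 0$ on the region $R$ defined by $0 < x-t < 2\pi$ and $\bar t < t+x < \bar t + 2\pi$, and then to invoke Khovanski\u{\i}'s theorem. I aim for the parameters $n = 2$, $k = 0$, $\rho = 4$, $m_1 = m_2 = 1$, which produce a per-region bound of
\[
m_1 m_2 \cdot (m_1 + m_2 + \rho + 1)^{\rho+k} \cdot 2^{\rho + (\rho+k)(\rho+k-1)/2} = 1 \cdot 7^4 \cdot 2^{10}.
\]
Obtaining this setup is the heart of the argument.

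The first step is the linear change of variables $w_1 = (t+x)/2$, $w_2 = (x-t)/2$, which carries $R$ to the axis-parallel rectangle $(\bar t/2,\bar t/2 + \pi) \times (0, \pi)$. The second step is to rewrite $h$ and $m$ in the $(w_1, w_2)$ coordinates via $t = w_1 - w_2$, $x = w_1 + w_2$ and the product-to-sum identities (for instance, $\cos t\,\cos x = \tfrac{1}{2}(\cos 2w_2 + \cos 2w_1)$ and $\sin t\,\cos x = \tfrac{1}{2}(\sin 2w_1 - \sin 2w_2)$). A direct expansion should show that $h$ and $m$ both become polynomials of degree at most $1$ in the enlarged collection of variables
\[
\{\, w_1,\; w_2,\; \sin(w_1 \pm w_2),\; \cos(w_1 \pm w_2),\; \sin 2w_1,\; \cos 2w_1,\; \sin 2w_2,\; \cos 2w_2 \,\},
\]
fitting exactly into Khovanski\u{\i}'s framework with trigonometric frequencies $b_1 = (1,-1)$, $b_2 = (1,1)$, $b_3 = (2,0)$, $b_4 = (0,2)$ and no exponentials.

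The third step is the covering argument. Khovanski\u{\i}'s hypothesis $|\langle b_q,(w_1,w_2)\rangle| < \pi/2$ for every $q$ reduces to $|w_1| < \pi/4$ and $|w_2| < \pi/4$, since the conditions coming from $b_1, b_2$ are implied by those from $b_3, b_4$ via $|w_1 \pm w_2| \leq |w_1| + |w_2| < \pi/2$. Because each side of the rectangle $(\bar t/2,\bar t/2 + \pi) \times (0, \pi)$ has length $\pi$, I would split it into $2 \times 2 = 4$ slightly overlapping open subrectangles of side strictly less than $\pi/2$ and, after translating the origin to the center of each one, apply Khovanski\u{\i}'s theorem. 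The translation $w_i \mapsto w_i - c$ only rewrites $\sin w_i, \cos w_i$ as linear combinations of themselves via the addition formulas, so the degrees of $h$ and $m$ are preserved. Each subregion therefore contributes at most $7^4 \cdot 2^{10}$ isolated zeros of $\{h = 0,\, m = 0\}$, producing $4 \cdot 7^4 \cdot 2^{10} = 7^4 \cdot 2^{12}$ in total.

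By Proposition~\ref{prop:der2} the sign-changing limit cycles are then at most $7^4 \cdot 2^{12} + 2$; adding the at most two sign-definite limit cycles (one positive and one negative, since in each half-plane $x > 0$ and $x < 0$ equation~\eqref{eq:abs} is linear in $x$) produces the claimed $7^4 \cdot 2^{12} + 2 + 2 = 9834500$. The principal obstacle I foresee is the algebraic verification that the product-to-sum manipulation really drops both $h$ and $m$ simultaneously to degree one in a common enlarged trigonometric basis, without leaving any hidden quadratic residue, together with the geometric bookkeeping that four translated Khovanski\u{\i} boxes cover the rectangle.
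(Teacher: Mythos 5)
Your overall strategy is the paper's: reduce via Proposition~\ref{prop:der2} to counting isolated solutions of $h=m=0$, apply Khovanski\u{\i}'s theorem with $n=2$, $k=0$, $\rho=4$, $m_1=m_2=1$ to get $7^4\cdot 2^{10}$ per admissible region, multiply by $4$, and add $2+2$. However, there is a genuine gap in your covering step, caused by your half-angle change of variables. With $w_1=(t+x)/2$, $w_2=(x-t)/2$ the trigonometric frequencies become $(1,\pm 1),(2,0),(0,2)$, so Khovanski\u{\i}'s admissible region $\{\vert\langle b_q,w\rangle\vert<\pi/2\}$ is, as you correctly compute, the open square $\vert w_1\vert<\pi/4$, $\vert w_2\vert<\pi/4$ of side $\pi/2$ and area $\pi^2/4$. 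You then propose to cover the open $\pi\times\pi$ rectangle $(\bar t/2,\bar t/2+\pi)\times(0,\pi)$ by $2\times2=4$ ``slightly overlapping open subrectangles of side strictly less than $\pi/2$''. This is impossible: four such boxes have total area strictly less than $\pi^2$, so they cannot cover a set of measure $\pi^2$ (already in one variable, two open intervals of length $<\pi/2$ cannot cover an open interval of length $\pi$). Consequently the factor $4$, and with it the exact constant $7^4\cdot2^{12}+2+2$, is not reached by your argument; in your coordinates one needs strictly more translates (at least five, realistically a $3\times3$-type covering), which gives a larger bound.

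The paper avoids this precisely by not halving the angles: it applies Khovanski\u{\i} directly in $(t,x)$ with frequencies $b_1=(1,0)$, $b_2=(0,1)$, $b_3=(1,1)$, $b_4=(-1,1)$, for which $h$ and $m$ are already degree-$1$ polynomials in $t,x,\sin t,\cos t,\sin x,\cos x,\sin(x\pm t),\cos(x\pm t)$ (so your product-to-sum step is unnecessary), and the admissible region is the diamond $\{\vert t+x\vert<\pi/2,\ \vert x-t\vert<\pi/2\}$ — the constraints $\vert t\vert,\vert x\vert<\pi/2$ being implied — which has area $\pi^2/2$, twice that of your square; four translates of this diamond then cover the relevant region (up to finitely many boundary points, a caveat the paper itself glosses over). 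So the fix is to keep $t,x$ as the Khovanski\u{\i} variables; as written, your proof establishes a finite bound but not the stated one. The remaining ingredients of your proposal — the degree-$1$ structure, the ``$+2$'' from Proposition~\ref{prop:der2}, and the ``$+2$'' for the constant-sign limit cycles of \eqref{eq:abs} — agree with the paper.
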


\begin{proof}
First, note that there are at most two limit cycles that do not change sign, so we only need to limit the number of limit cycles that change sign. Also, as we noted earlier, we assume $a_0 \neq 0$.

Now, since \[\frac{1}{a_0} h(t,x) = 2 \left( r_1 \sin(x)-r_2 \cos(x)+x-r_1 \sin(t) + r_2 \cos(t)-t\right) -2 \pi\] and 
\begin{align*}
\frac{2}{a_0} m(t,x) = & a(t)b(x)-a(x)b(t)e^{\pi a_0} \\ = 
& (e^{\pi a_0}-1)(b_0 r_2 - r_1)\sin(x+t) \\
& + (e^{\pi a_0}-1)(b_0 b_1 + r_2)\cos(x+t) \\
& + (e^{\pi a_0}-1)(b_0 r_2 - r_1)\sin(x-t) \\
& + (e^{\pi a_0}-1)(b_0 r_1 - r_2)\cos(x-t) \\
& + 2 (b_0 r_2 - e^{\pi a_0})\sin(t) + 2 b_0 (r_1 + e^{\pi a_0}) \cos(t) \\ & - 2(e^{\pi a_0} b_0 r_2 - 1) \sin(x)
- 2b_0(e^{\pi a_0} r_1+1) \cos(x)
\\ & -2(e^{\pi a_0}-1) b_0
\end{align*}
with $r_i = a_i/a_0,\ i = 1,2$, are polynomials of degree $1$ in the variables $t, x, \sin(t),$ $\sin(x), \sin(x+t),  \sin(x-t), \cos(t), \cos(x), \cos(x+t),$ and $\cos(x-t)$. By the Khovanski\u{\i} theorem, with $n=2, x_1 = t, x_2 = x, k=0,  m_1=m_2=1, b_1 = (1,0), b_2 = (0,1), b_3=(1,1)$ and $b_4=(-1,1)$, that is $\rho=4$, we have that the number  of isolated real solutions of the system $h(t,x) = m(t,x) = 0$ in the region \begin{align*} Q & = \{\vert \langle b_q, x \rangle \vert < \pi/2\ : q = 1, \ldots, 4\} \\ & = \{(x,t) \in \mathbb{R}^2 : \vert x+t \vert < \pi/2, \vert x-t \vert < \pi/2\}
\end{align*} is finite and less than $7^4 \cdot 2^{10}$. Now, since we need four translations of $Q$ to cover the region $(0,\pi) \times (\pi, 2\pi)$, we conclude that the number of isolated real solutions of the system $h(t,x) = m(t,x) = 0$ for $(t,x) \in (0,\pi) \times (\pi, 2\pi)$ is bounded by $4 \cdot 7^4 \cdot 2^{10} = 7^4 \cdot 2^{12}$. 

Finally, by Proposition \ref{prop:der2}, we conclude that the of limit cycles of \eqref{eq:abs} that change sign is less than or equal to $7^4 \cdot 2^{12} +2$.
\end{proof}

\subsection{Improving the upper bound}
The aim of this subsection is to improve the upper bound given by Theorem~\ref{theo:h16}, by performing a more detailed analysis of the system of equations $m(t,x) = h(t,x) =0$.

Note that $m(t,x)$ is a trigonometric polynomial, while $h(t,x)$ is the primitive of a trigonometric polynomial. Our goal is to reduce the problem of bounding the number of limit cycles to computing the number of intersections of two trigonometric polynomial curves, which can be done by Bezout's theorem. Then, we repeat the arguments above and study the tangencies of $h(t,x)$ along the branches of  $m(t,x)=0$, so the next step is to study the implicit equation $m(t,x)=0$. 

Recall that as in the previous subsection, we assume that  $a(t)$ and $b(t)$ change sign, and that $a_0$ and $b_0$ are generic. 

\begin{prop}\label{prop:3}
The number of connected components of $m(t,x)=0$ in $(0,2 \pi)\times (0,2\pi)$ is at most $3$.
\end{prop}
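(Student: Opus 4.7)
The plan is to apply the Weierstrass substitution to reduce $m(t,x)=0$ to a real algebraic curve in $\mathbb{R}^2$, bound its components using a discriminant argument, and then transfer the bound to the open square. Setting $u=\tan(t/2),\ v=\tan(x/2)$ and writing $a(s)=A(u)/(1+u^2),\ b(s)=B(u)/(1+u^2)$ with
\[
A(u)=(a_0-a_1)u^2+2a_2u+(a_0+a_1),\qquad B(u)=2u(1+b_0u),
\]
and $c:=e^{\pi a_0}$, the equation $m(t,x)=0$ becomes the bidegree-$(2,2)$ polynomial equation
\[
P(u,v):=A(u)B(v)-c\,A(v)B(u)=0.
\]

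The key step is to view $P(u,v)$ as a quadratic polynomial in $u$ for fixed $v$. Its discriminant $\Delta(v)=\beta(v)^2-4\alpha(v)\gamma(v)$ (where $\alpha,\beta,\gamma$ are the coefficients of $u^2,u,1$, each a quadratic in $v$) is a real polynomial of degree at most $4$, so $\{v\in\mathbb{R}:\Delta(v)>0\}$ consists of at most three connected intervals. Over each such interval the curve $\{P=0\}$ is the union of two continuous branches $u_{\pm}(v)$ which meet at the finite endpoints where $\Delta(v)=0$; hence each interval contributes exactly one connected component of $\{P=0\}\subset\mathbb{R}^2$ (a closed oval over a bounded interval, or an arc joined at its one finite endpoint over an unbounded interval). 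This yields the bound of three components for $\{P=0\}$ in the affine $(u,v)$-plane.

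Passing the bound to $(t,x)\in(0,2\pi)^2$ requires exploiting that the Weierstrass map is a continuous bijection from $(0,2\pi)^2$ onto $(\mathbb{R}\mathrm{P}^1\setminus\{0\})^2$. The axes $\{u=0\}$ and $\{v=0\}$ correspond to the (excluded) boundary of the square, while the lines $t=\pi,\ x=\pi$ correspond to the points at infinity $u=\infty,\ v=\infty$, which can only glue unbounded branches of $\{P=0\}$ together rather than split them. A short asymptotic analysis of the leading terms of $P$ as $v\to\pm\infty$ confirms that the two sheets $u_{\pm}(v)$ over an unbounded interval of $\{\Delta>0\}$ indeed meet at a single projective point, so the count of three carries over.

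The main obstacle is precisely this final topological transfer: one must verify carefully the behaviour of $u_{\pm}(v)$ at infinity (so that each unbounded interval of $\{\Delta>0\}$ contributes a single component, not two disjoint arcs), and one must handle non-generic parameter values for which $\Delta$ acquires a multiple root and $\{P=0\}$ becomes singular. These non-generic values form a thin set and can be excluded by the continuity-in-parameters proposition preceding Theorem~\ref{theo:h16}, which lets us replace any limit-cycle count at a degenerate parameter by the same count at a nearby generic one, so the bound of three is preserved in the open square.
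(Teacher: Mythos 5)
Your reduction to the algebraic curve $P(u,v)=A(u)B(v)-c\,A(v)B(u)=0$ is fine, but the two steps that carry the whole weight of the argument have genuine gaps. First, the discriminant step: the claim that each interval of $\{\Delta>0\}$ contributes exactly one connected component is not justified and is false for general bidegree-$(2,2)$ curves. The two sheets $u_{\pm}(v)$ meet at a finite endpoint $v_0$ of such an interval only when the leading coefficient $\alpha(v)=(a_0-a_1)B(v)-2b_0c\,A(v)$ does not vanish there; if $\alpha(v_0)=0$ a branch escapes to infinity, and over an unbounded interval the two sheets need not meet at all in the affine plane (for instance $u^2(v^2-1)-1=0$ has $\Delta(v)=4(v^2-1)$ and \emph{two} affine components over $(1,\infty)$). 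Meeting ``at a projective point'' does not give affine connectivity; if you instead want to use such gluings you must work on $\mathbb{RP}^1\times\mathbb{RP}^1$ (legitimate, since $u=\infty$, $v=\infty$ correspond to $t=\pi$, $x=\pi$, which lie in the square), but then the whole discriminant analysis, including the zeros of $\alpha$ and the behaviour on the two circles at infinity, has to be carried out there, and your proposal only asserts this. Second, the transfer to the open square is not merely a matter of gluing at infinity: $(0,2\pi)^2$ corresponds to the torus \emph{minus} the circles $u=0$ and $v=0$, and the curve does meet these circles, since $P(0,v)=(a_0+a_1)B(v)$ and $P(u,0)=-c(a_0+a_1)B(u)$ vanish at $(0,0)$, $(0,-1/b_0)$, $(-1/b_0,0)$. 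Excising those points can split components and increase the count, a direction you do not address. Finally, the degenerate cases (multiple roots of $\Delta$, reducible $P$) cannot be dismissed by the perturbation proposition preceding Theorem~\ref{theo:h16}: that statement preserves numbers of limit cycles under small changes of $a_0,b_0$, not the topology of $m(t,x)=0$, which is what this proposition is about (the paper handles this by simply assuming $a_0,b_0$ generic throughout the subsection).

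For comparison, the paper avoids all of these issues by centering the substitution: with $t\to 2\operatorname{atan}(z_1)+\pi$, $x\to 2\operatorname{atan}(z_2)+\pi$, the map is a diffeomorphism of $\mathbb{R}^2$ onto the open square (no boundary or infinity bookkeeping), and the numerator of $b$ becomes linear, so $m$ transforms into a cubic $f(z_1,z_2)$. Its irreducibility (Lemma~\ref{le:ceros_comunes}, valid for generic parameters) allows the affine Harnack bound $\frac{(3-1)(3-2)}{2}+(3-1)=3$. Your uncentered substitution loses both advantages at once: the curve has degree four, for which neither Harnack nor a corrected version of your discriminant argument yields the bound $3$ without substantially more work, and the square no longer matches the affine plane. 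If you want to keep an elementary discriminant-style argument, you would at least need to control the real zeros of $\alpha(v)$, verify the connections on the compactified curve, and account for the points removed on the axes; as written, the proof does not establish the stated bound.
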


\begin{proof}
As $a_0,b_0$ are generic, we may assume that $a(t)$ and $b(t)$ have not common zeroes in $[0,2\pi]$. By the change of variables $t \to 2 \operatorname{atan}(z_1)+\pi$ and $x \to 2 \operatorname{atan}(z_2)+\pi$, $m(t,x)$ becomes \[\phi(z_1,z_2) := \frac{2\, f(z_1,z_2)}{(z_1^2+1)(z_2^2+1)},\] where $f(z_1,z_2)$ is a polynomial of degree $3$ which is irreducible over the complex by Lemma \ref{le:ceros_comunes}. According to \cite[p. 300]{KW}, we can apply Harnack's theorem to the affine curve defined by $f(z_1,z_2) = 0$ in $\mathbb{R}^2$, so that we conclude that $f(z_1,z_2) = 0$ has at most $\frac{(3-1)(3-2)}{2} + (3-1) = 3$ connected components in $\mathbb{R}^2$. Therefore, $\phi(z_1,z_2)$ has at most three connected components. Now, since the above change of variables is a diffeomorphism from $\mathbb{R}^2$ to $(0,2 \pi) \times (0,2\pi)$; in particular, it is an homeomorphism and we conclude that $m(t,x)=0$ has at most three connected components in $(0,2 \pi) \times (0,2\pi)$. 
\end{proof}

Since we are assuming $a_0\neq 0$, dividing $a(t)$ by $a_0$, we can take $\bar a(t):=a(t)/a_0 = 1 + r_1 \cos(t) + r_2 \sin(t)$ and $c := e^{\pi a_0}$. Then $m(t,x)=0$ is equivalent to $\bar a(t)b(x)-\bar a(x)b(t) c=0$, which for $t,x$ such that $b(t)b(x)\neq 0$ can be rewritten as \[k(t)-k(x)c=0,\] where $k(t) = \bar a(t)/b(t)$. 

By Proposition~\ref{prop:der2}, 
to bound the number of limit cycles of~\eqref{eq:abs}, it suffices to compute the number of isolated solutions of 
$h(t,x)=0$, $m(t,x)=0$,
for 
$0<x-t<2\pi$ and $\bar t<t+x<\bar t+2\pi$. Note that in every connected component of $m(t,x)=0$, if we have two consecutive zeros of $h(t,x)=0$, then the gradients of $m(t,x)=0$ and of $h(t,x)=0$ must be proportional. Therefore, the number of isolated zeros of $m(t,x)=h(t,x)=0$
is bounded by the number of points of $m(t,x)=0$ where both gradients are proportionals, plus three by Proposition \ref{prop:3}. 

Next, we show that the points of $m(t,x)=0$ where both gradients are proportional are, after a suitable variable change, the real solutions of two algebraic equations, so that we can bound their number using Bezout's theorem.

Observe that the gradient of $m(t,x)=0$ is $(k'(t),-k'(x)c)$, where $k'(t) = -n(t)/b(t)^2$ with \[n(t) := ((b_0r_1+b_0)\sin(t)+(1-b_0r_2)\cos(t)+b_0r_2+r_1.\]

\begin{prop}\label{prop01}
The set of points $(t,x) \in (0,\pi)\times (\pi,2\pi)$ such that $m(t,x)=0$ and $(h_t,h_x)=(-2 \bar a(t),2 \bar a(x))$ is proportional to $(k'(t), -k'(x) c)$ is the set $\mathcal Z$ of solutions of 
\begin{equation}\label{ecu01}
\begin{cases}
& \bar a(t)b(x)-\bar a(x)b(t) c=0,\\
& n(t)b(x)^3-n(x)b(t)^3 c^2=0.
\end{cases}
\end{equation}
The number of isolated points of $\mathcal Z$ is less than or equal to $27$.
\end{prop}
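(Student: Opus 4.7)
I would split the argument into two parts: first establish the algebraic characterization of the set under consideration, then bound the number of its isolated points using Bezout's theorem.

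For the characterization, a short computation (using $\bar a'(t)b(t)-\bar a(t)b'(t)=-n(t)$) gives $k'(t)=-n(t)/b(t)^2$. Thus proportionality of $(h_t,h_x)=(-2\bar a(t),2\bar a(x))$ with the gradient $(k'(t),-c k'(x))$ of the defining function of the first equation of~\eqref{ecu01} translates into
\[
\bar a(x)\,n(t)\,b(x)^2-c\,\bar a(t)\,n(x)\,b(t)^2=0.
\]
On the locus $m=0$, i.e.\ $\bar a(x)b(t)c=\bar a(t)b(x)$, I would substitute for $\bar a(x)$, multiply through by $c\,b(t)$, and factor out $\bar a(t)$ (nonzero by the standing genericity assumption together with the assumption that $\bar a$ and $b$ have no common zeros) to arrive at $n(t)b(x)^3-c^2 n(x)b(t)^3=0$. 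Each step is reversible where $\bar a(t)b(t)b(x)\neq 0$, yielding the stated equivalence.

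For the quantitative bound, I would reuse the birational change of variables $t=2\arctan(z_1)+\pi$, $x=2\arctan(z_2)+\pi$ introduced in the proof of Proposition~\ref{prop:3}. Under it,
\[
b(t)=\frac{2(b_0-z_1)}{z_1^2+1},\qquad \bar a(t)=\frac{A(z_1)}{z_1^2+1},\qquad n(t)=\frac{N(z_1)}{z_1^2+1},
\]
with $A(z)$ and $N(z)$ polynomials of degree $2$ in $z$. Clearing denominators in the first equation of~\eqref{ecu01} produces a polynomial $P_1(z_1,z_2)$ of total degree $3$, which is irreducible by Lemma~\ref{le:ceros_comunes}. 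Clearing denominators in the second equation with common denominator $(z_1^2+1)^3(z_2^2+1)^3$ produces a polynomial $P_2(z_1,z_2)$ whose two summands have bidegrees $(6,3)$ and $(3,6)$, so that $\deg P_2 = 9$.

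Bezout's theorem in $\mathbb{P}^2_{\mathbb R}$ then bounds the number of isolated common zeros of $P_1$ and $P_2$ by $3\cdot 9=27$, and since the change of variables is a diffeomorphism of $\mathbb{R}^2$ onto $(0,2\pi)\times(0,2\pi)$ the count transfers back to $\mathcal Z$. The main obstacle is verifying that Bezout is applicable, i.e.\ that $P_1$ and $P_2$ share no common component. As $P_1$ is irreducible of degree $3$, this amounts to $P_1\nmid P_2$; any such divisibility would be an algebraic identity among the parameters $r_1,r_2,b_0,c$, which the standing genericity assumption on $(a_0,b_0)$ excludes, completing the argument.
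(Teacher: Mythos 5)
Your proposal is correct and follows essentially the same route as the paper: it reduces the proportionality condition on $m(t,x)=0$ to the system \eqref{ecu01} (the paper does this via the vanishing of the $2\times 2$ minors of a $2\times 3$ matrix, which is the same computation as your substitution using $k'(t)=-n(t)/b(t)^2$), then applies the change of variables $t\to 2\operatorname{atan}(z_1)+\pi$, $x\to 2\operatorname{atan}(z_2)+\pi$ to obtain polynomials of degrees $3$ and $9$ and invokes Bezout to get the bound $27$. Your explicit check that the degree-$3$ curve is irreducible (Lemma~\ref{le:ceros_comunes}) and shares no component with the degree-$9$ curve is a welcome justification of the Bezout hypothesis that the paper leaves implicit; the only loose point is your claim that $\bar a(t)\neq 0$ follows from the absence of common zeros of $\bar a$ and $b$ (it does not, though the degenerate case $\bar a(t)=\bar a(x)=0$ is equally glossed over in the paper's own argument).
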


\begin{proof}
A point $(t,x) \in (0,\pi)\times (\pi,2\pi)$ satisfies both that $m(t,x)=0$ and that $(h_t,h_x)=(-2 \bar a(t),2 \bar a(x))$ is proportional to $(k'(t), -k'(x) c)$ if and only if the matrix 
\[
\left(\begin{array}{ccc}  -\bar a(t) & b(t) c & k'(t) \\
\bar a(x) & -b(x) & -k'(x) c \end{array}\right)
\]
has rank two, that is, the $2 \times 2-$minors of the above matrix are equal to zero.

Now, by the change of variables $t \rightarrow 2 \operatorname{atan}(z_1)+\pi,\ x \rightarrow 2 \operatorname{atan}(z_2)+\pi$, we obtain that \eqref{ecu01} is equivalent to a polynomial system of equations determined by two polynomials; one of degree $3$ and another of degree $9$. Therefore, by Bezout's theorem (see \cite[Theorem 4.106]{ARAG} for more details) $\mathcal Z$ has at most 27 isolated points.
\end{proof}

\begin{rema}\label{rema:15}
Applying Gr\"obner basis techniques, one can show that cardinality of the set $\mathcal Z$ in Proposition \ref{prop01} is less than or equal to $15$. Indeed, using Singular \cite{Singular}, we can compute a basis Gr\"obner of the ideal $\mathfrak a$ of $\mathbb{C}[z_1,z_2]$ generated by the two polynomials that define the system equivalent to \eqref{ecu01} after the change of variable $t \rightarrow 2 \operatorname{atan}(z_1)+\pi,\ x \rightarrow 2 \operatorname{atan}(z_2)+\pi$, for generic $r_1, r_2, b_0$ and $c$.
The initial ideal of $\mathfrak{a}$ with respect to the graded reverse lexicographic monomial order on $\mathbb{C}[z_1,z_2]$ is generated by $\{z_1^6,z_1^2z_2,z_1z_2^5,z_2^6\}$. Therefore, $\mathbb{C}[z_1,z_2]/\mathfrak{a}$ is a finite $\mathbb{C}-$algebra of degree equal to \[\dim_{\mathbb{C}} \mathbb{C}[z_1, z_2]/\langle z_1^6,z_1^2z_2,z_1z_2^5,z_2^6 \rangle = 15,\] that is, for generic $r_1, r_2, b_0$ and $c$, the set $\mathcal Z$ has cardinality $15$ at most.
\end{rema}

Summarizing all of the above results, we obtain the following.

\begin{theo}
The number of limit cycles of \eqref{eq:abs} is less than or equal to $34 = 27+3+2+2$ (or $22 = 15+3+2+2$ by Remark~\ref{rema:15}).  
\end{theo}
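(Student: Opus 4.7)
The plan is to synthesize the three preceding results (Propositions~\ref{prop:der2}, \ref{prop:3}, and \ref{prop01}) via a Rolle-type estimate on the curve $m(t,x)=0$, and then add the contribution from periodic solutions of constant sign.

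First, I would invoke Proposition~\ref{prop:der2}: if $N$ denotes the number of isolated solutions of the system $h(t,x)=m(t,x)=0$ in the region $0<x-t<2\pi$, $\bar t<t+x<\bar t+2\pi$, then the number of limit cycles of \eqref{eq:abs} that change sign is at most $N+2$. So the task reduces to bounding $N$.

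Next, the key estimate: along each connected component of the level set $m(t,x)=0$, two consecutive zeros of $h$ (restricted to that component) must be separated by a point at which $\nabla h$ and $\nabla m$ are proportional. Indeed, parametrizing the smooth branch of $m=0$, the restriction of $h$ is a smooth function that vanishes at the two consecutive zeros, so by Rolle's theorem it has a critical point of the restriction in between, which is precisely the proportionality condition for the gradients. Hence, on each component the number of isolated intersections of $h=0$ and $m=0$ exceeds the number of gradient-proportionality points by at most one. Summing over components, Proposition~\ref{prop:3} bounds the number of components of $m(t,x)=0$ by $3$ (in $(0,2\pi)^2$, which contains our region), and Proposition~\ref{prop01} bounds the number of points on $m=0$ where $\nabla h$ and $\nabla m$ are proportional by $27$. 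Therefore $N\leq 27+3=30$, so the number of limit cycles that change sign is at most $32$.

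Finally, since \eqref{eq:abs} is linear on $\{x\geq 0\}$ and on $\{x\leq 0\}$, each half-plane admits at most one periodic solution; hence there are at most $2$ limit cycles of constant sign. Adding, we obtain $32+2=34$. Using the sharpened bound of $15$ from Remark~\ref{rema:15} in place of $27$ yields $15+3+2+2=22$.

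The point I expect to be most delicate is the Rolle-type count on the components of $m=0$: it is necessary to know (for generic $a_0,b_0$, which we have already reduced to) that each component is a regular smooth curve, so that the implicit parametrization is well defined and Rolle's theorem applies cleanly on it. Generic smoothness of $m=0$ should follow from the irreducibility used in Proposition~\ref{prop:3} (no singular points on the affine curve in the generic parameter range), but this should be stated explicitly. A minor bookkeeping check is also needed to confirm that a component which is a closed loop contributes at most as many zeros of $h$ as gradient-proportionality points (rather than one extra), so that the additive bound $27+3$ is not exceeded.
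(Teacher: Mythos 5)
Your proposal is correct and follows essentially the same route as the paper: Proposition~\ref{prop:der2} reduces the problem to counting isolated solutions of $h=m=0$, the Rolle-type argument along the components of $m(t,x)=0$ combined with Propositions~\ref{prop:3} and \ref{prop01} gives the bound $27+3$ (or $15+3$ via Remark~\ref{rema:15}), and the two constant-sign limit cycles are added at the end, exactly as in the text preceding the theorem. Your remarks on generic smoothness of the components and on closed components contributing no extra intersection are reasonable refinements of details the paper leaves implicit, not deviations from its argument.
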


\appendix

\section{Irreducibility of $m$}

Next we prove that, generically, the function $f(z_1,z_2)$ defined in Proposition~\ref{prop:3} is irreducible.

\begin{lema}\label{le:ceros_comunes}
Let $a_0 \neq 0.$ The following statements are equivalent.
\begin{enumerate}
\item $a(t)$ and $b(t)$ have not common zeroes in $[0,2\pi]$.
\item $(r_1+1)\left((r_1+1) b_0^2 - 2 b_0 r_2 - r_1 + 1\right) \neq 0$.
\item After the change of variables $t \to 2 \operatorname{atan}(z_1)+\pi$ and $x \to 2 \operatorname{atan}(z_2)+\pi$, the numerator of $m(t,x)$ is an irreducible polynomial over the complex. 
\end{enumerate}
\end{lema}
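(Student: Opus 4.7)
The plan is to prove the three-way equivalence in two stages: establish (1)$\Leftrightarrow$(2) by a direct computation of the common zeros of $a$ and $b$, and then relate the irreducibility in (3) to the absence of such common zeros through a factorization analysis of the numerator of $m$ after the Weierstrass substitution.

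For (1)$\Leftrightarrow$(2), I would first locate the zeros of $b(t)=\sin t+b_0(1-\cos t)$ in $[0,2\pi]$. One zero is $t=0$ (equivalently $t=2\pi$); for the other, the substitution $t=2\operatorname{atan}(z)+\pi$, which maps $\mathbb{R}$ bijectively onto $(0,2\pi)$, converts $b(t)$ into $2(b_0-z)/(z^2+1)$, yielding the unique remaining zero $\bar t\in(0,2\pi)$ at $z=b_0$. Since $a(0)=a_0(1+r_1)$, and under the same substitution the numerator of $a(t)$ equals $a_0\bigl[(1+r_1)z^2-2r_2\,z+(1-r_1)\bigr]$, the conditions $a(0)=0$ and $a(\bar t)=0$ correspond precisely to the vanishing of the two factors in (2).

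Next, I would compute the numerator of $m(t,x)$ after the substitution:
\[
f(z_1,z_2) = A(z_1)(b_0-z_2) - c\,A(z_2)(b_0-z_1), \qquad A(z) = (a_0+a_1)z^2 - 2a_2\,z + (a_0-a_1),
\]
a polynomial of total degree $3$ whose cubic leading form equals $(a_0+a_1)\,z_1 z_2\,(c z_2 - z_1)$, a product of three pairwise non-proportional linear forms whenever $r_1+1\neq 0$. For the implication (not(1))$\Rightarrow$(not(3)): if $A(b_0)=0$, then $f(b_0,z_2)=A(b_0)(b_0-z_2)\equiv 0$, so $(z_1-b_0)\mid f$, yielding a non-trivial factorization; if instead $r_1+1=0$, the coefficient $a_0+a_1$ vanishes and $f$ drops below degree $3$, so it is no longer an irreducible cubic.

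For the converse (1)$\Rightarrow$(3), assume (2) and suppose for contradiction $f=g\cdot h$ with $\deg g=1$ and $\deg h=2$. The leading form of $g$ must be proportional to one of the three linear factors of the cubic part, giving three cases: $g=z_1+\alpha$, $g=z_2+\beta$, or $g=c z_2-z_1+\gamma$. In each case, substitute the vanishing locus of $g$ into $f$ and equate to zero the coefficients of the resulting polynomial in the remaining variable; the resulting over-determined linear system is consistent only if $A(b_0)=0$ or $a_0+a_1=0$, contradicting (2). The main obstacle is the third case, where the substitution $z_1 = c z_2+\gamma$ produces a cubic in $z_2$ with four coefficients that must simultaneously vanish; the elimination among these, for the generic parameters $a_0,a_1,a_2,b_0,c$, requires careful bookkeeping but ultimately reduces exactly to the factored condition in (2), at which point the three cases together exhaust the possible leading forms of $g$ and the lemma follows.
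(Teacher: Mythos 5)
Your equivalence (1)$\Leftrightarrow$(2) is correct (the paper instead solves a linear system in $(\cos t,\sin t)$, but your rational parametrization argument is equivalent and clean), and your strategy for (2)$\Rightarrow$(3) --- factor the cubic leading form $(a_0+a_1)z_1z_2(cz_2-z_1)$ and run a three-case analysis on the linear factor --- is a genuinely different route from the paper, which proves (2)$\Leftrightarrow$(3) by a brute-force factorization ansatz processed with Singular (minimal primes plus saturation and elimination). However, as written you only assert the decisive step: in Cases 1 and 2 the vanishing of the top coefficient forces $\alpha=\beta=-b_0$ and then $f(b_0,z_2)=A(b_0)(b_0-z_2)$, so $A(b_0)=0$; but Case 3 is exactly the content that the paper's Gr\"obner computation certifies, and you defer it to ``careful bookkeeping.'' The claim is true and the bookkeeping is short: writing $A(z)=pz^2+qz+r$ with $p=a_0+a_1$, $q=-2a_2$, $r=a_0-a_1$, the $z_2^2$-coefficient of $f(cz_2+\gamma,z_2)$ equals $c\left[(c-1)(pb_0+q)-p\gamma\right]$, so $p\gamma=(c-1)(pb_0+q)$, and substituting this into the $z_2$-coefficient $2pcb_0\gamma-p\gamma^2+q(c-1)\gamma+r(c^2-1)$ gives, after multiplying by $p$, the identity $(c^2-1)\,p\,A(b_0)=0$; since $c=e^{\pi a_0}>0$ and $c\neq1$ and $p\neq0$, this forces $A(b_0)=0$, i.e.\ the second factor of (2) vanishes. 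With that computation included, your argument gives a computer-free proof of the implication actually used later in the paper, namely no common zeroes $\Rightarrow$ irreducibility.

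The genuine gap is in your treatment of the subcase $r_1+1=0$ of ``not(2)$\Rightarrow$not(3)'': ``$f$ drops below degree $3$, so it is no longer an irreducible cubic'' is a non sequitur, because statement (3) asserts irreducibility of the numerator, not irreducibility as a cubic. Worse, the implication cannot be repaired: for $r_1=-1$ the numerator is the bilinear polynomial $-2(c-1)r_2z_1z_2+2(c-b_0r_2)z_1+2(b_0cr_2-1)z_2+2b_0(1-c)$, whose reducibility criterion (determinant of the $2\times2$ coefficient array) equals $4c(1-b_0r_2)^2$ up to sign, so it is reducible only when $b_0r_2=1$, which is precisely $A(b_0)=0$; for generic parameters with $r_1=-1$ it is irreducible. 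Concretely, $r_1=-1$, $r_2=1$, $b_0=2$, $c=2$ gives numerator $-2(z_1z_2-3z_2+2)$, irreducible over $\mathbb{C}$, while $a$ and $b$ share the zero $t=0$. The reason is that the common zero at $t=0$ is sent to infinity by $t\to2\operatorname{atan}(z_1)+\pi$ and manifests only as a degree drop, never as a factorization; irreducibility of the numerator is equivalent to $(r_1+1)b_0^2-2b_0r_2-r_1+1\neq0$ alone (no common zero of $a,b$ in the open interval $(0,2\pi)$, i.e.\ $a(\bar t)\neq0$), and the factor $r_1+1$ cannot be detected from (3). This touches the lemma's stated (3)$\Rightarrow$(2) direction as much as your proof of it; the direction needed for Proposition 2.10, namely (1)/(2)$\Rightarrow$(3), is unaffected and is exactly what your case analysis, once Case 3 is written out, establishes.
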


\begin{proof}
(1) $\Longleftrightarrow$ (2). Let us prove that $a(t)$ and $b(t)$ has common zeroes if and only if $(r_1+1)\left((r_1+1) b_0^2 - 2 b_0 r_2 - r_1 + 1\right) = 0$. Since $a(t) = a_0 (1 + r_1 \cos(t) + r_2 \sin(t))$ and $b(t) = \sin(t) + b_0 (1 - \cos(t))$, we have that 
\begin{equation}\label{ecu:sys}
\left(\begin{array}{rr} r_1 & r_2 \\ -b_0 & 1 \end{array}\right) \left(\begin{array}{c} \cos(t) \\ \sin(t) \end{array}\right) = \left(\begin{array}{c} -1 \\ -b_0 \end{array}\right).
\end{equation}

If $b_0 r_2 + r_1 = 0$, then \eqref{ecu:sys} has solutions if and only if $r_1 = -1$ and $b_0 r_2 = -1$, which is equivalent to $(r_1+1)\left((r_1+1) b_0^2 - 2 b_0 r_2 - r_1 + 1\right) = 0$ when $b_0 r_2 + r_1 = 0$. 

Otherwise, if $b_0 r_2 + r_1 \neq 0$, then
\[
\left(\begin{array}{c} \cos(t) \\ \sin(t) \end{array}\right) = \left(\begin{array}{rr} r_1 & r_2 \\ -b_0 & 1 \end{array}\right)^{-1} \left(\begin{array}{c} -1 \\ -b_0 \end{array}\right) = 
\frac{1}{b_0 r_2 + r_1}\left(\begin{array}{c} b_0 r_2 - 1 \\ b_0 (r_1 + 1) \end{array}\right),
\]
thus, the system  \eqref{ecu:sys} has solutions if and only if $(b_0 r_2 - 1)^2 + b_0 (r_1 + 1)^2 = (b_0 r_2 + r_1)^2$; equivalently, $(r_1+1)\big((r_1+1) b_0^2 - 2 b_0 r_2 - r_1 + 1 \big) = 0$.

(3) $\Longleftrightarrow$ (2).  Set $c = e^{\pi a_0}$. By the change of variables $t \to 2 \operatorname{atan}(z_1)+\pi$ and $x \to 2 \operatorname{atan}(z_2)+\pi$ we have that $m(t,x)$ becomes \[\frac{2\, f(z_1,z_2)}{(z_1^2+1)(z_2^2+1)}\] where
\begin{align*}
  f(z_1,z_2) = & -(r_1+1) z_1^2z_2 + c(r_1+1) z_1 z_2^2 + b_0(r_1+1) z_1^2 \\
               &  - 2(c-1)r_2 z_1 z_2- b_0 c (r_1 + 1) z_2^2 -(2b_0r_2+c(r_1-1)) z_1 \\
               & + (2 b_0 c r_2+r_1-1) z_2 + b_0 (c-1)(r_1-1)
\end{align*}

Suppose that $f$ factorizes into two nonconstant polynomials, say 
\[f_1 = a_1z_1^2z_2+a_2z_1z_2^2+a_3z_1^2+a_4z_1z_2+a_5z_2^2+a_6z_1+a_7z_2+a_8\]
and
\[f_2 = b_1z_1^2z_2+b_2z_1z_2^2+b_3z_1^2+b_4z_1z_2+b_5z_2^2+b_6z_1+b_7z_2+b_8.\] 
Therefore, the coefficients of the monomials of $f_1 f_2 - f = 0$ have to be equal to zero. 

Let us first notice that the coefficients of the monomials $z_1^2 z_2^2$ and $z_1^i z_2^j$ for $i > 2$ or $j > 2$ must be zero, that is, 
\begin{equation}\label{ecu:ideal1}
\begin{split}
 0 & = a_1b_1 = a_2b_1+a_1b_2 = a_2b_2 = a_4b_1+a_1b_4 \\
   & = a_3b_1+a_4b_2+a_1b_3+a_2b_4 = a_5b_1+a_3b_2+a_2b_3+a_1b_5 \\
   & = a_5b_2+a_2b_5 = a_4b_4 = a_6b_1+a_4b_3+a_3b_4+a_1b_6 \\
   & = a_7b_1+a_6b_2+a_3b_3+a_5b_4+a_4b_5+a_2b_6+a_1b_7 \\
   & = a_7b_2+a_5b_3+a_3b_5+a_2b_7 = a_5b_5 = a_6b_4+a_4b_6 = a_7b_5+a_5b_7.
\end{split}
\end{equation}
With the following Singular \cite{Singular} commands:
{\small
\begin{verbatim}
  LIB "primdec.lib";
  ring r = 0, (b0,r1,r2,c,
               a1,a2,a3,a4,a5,a6,a7,a8,
               b1,b2,b3,b4,b5,b6,b7,b8),dp;
  ideal m = 
    a1*b1,a2*b1+a1*b2,a2*b2,a4*b1+a1*b4,
    a3*b1+a4*b2+a1*b3+a2*b4,a5*b1+a3*b2+a2*b3+a1*b5,
    a5*b2+a2*b5,a4*b4,a6*b1+a4*b3+a3*b4+a1*b6,
    a7*b1+a6*b2+a3*b3+a5*b4+a4*b5+a2*b6+a1*b7,
    a7*b2+a5*b3+a3*b5+a2*b7,a5*b5,a6*b4+a4*b6,a7*b5+a5*b7;
  list L = minAss(m);
\end{verbatim}
}
\noindent we obtain that the system \eqref{ecu:ideal1} has the following eight solutions:
\[
\begin{split}
& b_7=b_6=b_5=b_4=b_3=b_2=b_1=0,\\
& b_7=b_5=b_4=b_3=b_2=b_1=a_4=a_2=a_1=0,\\
& b_6=b_5=b_4=b_3=b_2=b_1=a_5=a_2=a_1=0,\\
& b_5=b_4=b_3=b_2=b_1=a_5=a_4=a_2=a_1=0,\\
& b_5=b_4=b_2=b_1=a_5=a_4=a_3=a_2=a_1=0,\\
& b_5=b_2=b_1=a_6=a_5=a_4=a_3=a_2=a_1=0,\\
& b_4=b_2=b_1=a_7=a_5=a_4=a_3=a_2=a_1=0,\\
& a_7=a_6=a_5=a_4=a_3=a_2=a_1=0.
\end{split}
\]
Notice that the first and last solutions imply $f_2 \in \mathbb{R}$ and $f_1 = 0$, respectively.

Consider now the coefficients of the monomials $z_1^i z_2^j$ of $f_1 f_2-f$ such that $0 \leq i \leq 2, 0 \leq j \leq 2$ and $(i,j) \neq (2,2)$, namely  
\begin{align*}
0 & = a_8b_1+a_6b_3+a_7b_4+a_3b_6+a_4b_7+a_1b_8+r_1+1\\
  & = -r_1c+a_8b_2+a_7b_3+a_6b_5+a_5b_6+a_3b_7+a_2b_8-c\\
  & = -r_1b_0+a_8b_4+a_6b_6+a_4b_8-b_0\\
  & = 2r_2c+a_8b_3+a_7b_6+a_6b_7+a_3b_8-2r_2\\
  & = r_1b_0c+b_0c+a_8b_5+a_7b_7+a_5b_8\\
  & = 2r_2b_0+r_1c+a_8b_6+a_6b_8-c\\
  & = -2r_2b_0c+a_8b_7+a_7b_8-r_1+1\\
  & = -r_1b_0c+r_1b_0+b_0c+a_8b_8-b_0.
\end{align*}
{\small
\begin{verbatim}
  ideal A = 
     a8*b1+a6*b3+a7*b4+a3*b6+a4*b7+a1*b8+r1+1,
     -r1*c+a8*b2+a7*b3+a6*b5+a5*b6+a3*b7+a2*b8-c,
     -r1*b0+a8*b4+a6*b6+a4*b8-b0,
     2*r2*c+a8*b3+a7*b6+a6*b7+a3*b8-2*r2,
     r1*b0*c+b0*c+a8*b5+a7*b7+a5*b8,
     2*r2*b0+r1*c+a8*b6+a6*b8-c,
     -2*r2*b0*c+a8*b7+a7*b8-r1+1,
     -r1*b0*c+r1*b0+b0*c+a8*b8-b0;
\end{verbatim}
}

If we restrict these equations to each of the eight solutions obtained above, eliminating the first and the last for the reasons given above, we obtain six systems of equations defined by the six ideals $\mathfrak a_i,\ i = 1, \ldots, 6$, of $\mathbb{C}[b_0,r_1,r_2,c,a_1, \ldots, a_7, b_1, \ldots, b_8]$. 

If we now impose the conditions $c > 0$ (actually only $c \neq -1$ is needed) and $a_0 \neq 0$, i.e. $c \neq 1$, and eliminate the variables $a_1, \ldots, a_ 8, b_1, \ldots, b_ 8$, we obtain the conditions that must be satisfied by $b_0, c, r_1$, and $r_2$, if any, for $f$ to factorize into $f_1 \cdot f_2$. 
{\small
\begin{verbatim}
  for (int i=2; i<=7; i=i+1)
  {
    eliminate(sat(A+L[i],c*(c^2-1))[1],
    a1*a2*a3*a4*a5*a6*a7*a8*b1*b2*b3*b4*b5*b6*b7*b8);
  }
\end{verbatim}
}
As a result of the above computation we conclude that the above systems has solutions if and only if $(r_1 + 1) \left((r_1+1) b_0^2 - 2 b_0 r_2 - r_1 + 1\right) =0$. Therefore, $f$ is irreducible if, and only if, $(r_1 +1)\big((r_1+1) b_0^2 - 2 b_0 r_2 - r_1 + 1\big) \neq 0$.
\end{proof}

\section*{Acknowledgments}

The authors are partially supported by Junta de Extremadura/FEDER grant number IB18023. The first two authors are also partially supported by Junta de Extremadura/FEDER grant number GR21056 and by grant number PID2020-118726GB-I00 funded by MCIN/AEI/10.13039/501100011033 and by FEDER Funds "A way of making Europe". The third author is also patially supported by project PID2022-138906NB-C21 funded by MCIN/AEI/10.13039/501100011033 and by European Union NextGenerationEU/ PRTR and by research group FQM024 funded by Junta de Extremadura (Spain)/FEDER funds.

\end{document}